\documentclass[reqno]{amsart}

\usepackage{amssymb}
\usepackage{graphicx}
\usepackage{amscd}
\usepackage[pagebackref]{hyperref}
\usepackage{color}
\usepackage{tabularx}
\usepackage[table]{xcolor}
\usepackage{float}
\usepackage{graphics,amsmath,amssymb}
\usepackage{amsthm}
\usepackage{amsfonts}
\usepackage{latexsym}
\usepackage{epsf}
\usepackage{xifthen}
\usepackage{mathrsfs}
\usepackage{dsfont}
\usepackage{makecell}
\usepackage{subfig}
\usepackage{amsmath}
\usepackage{listings}
\usepackage{etoolbox}
\usepackage{fancyhdr}
\usepackage{pdflscape}
\usepackage[title,toc,titletoc]{appendix}
\usepackage{enumitem}
\usepackage[noadjust]{cite}
\usepackage{tikz}
\usetikzlibrary{automata,positioning,arrows}
\usepackage{young}
\usepackage[object=vectorian]{pgfornament} %%  http://altermundus.com/pages/tkz/ornament/index.html
\usepackage{lipsum,tikz}
\usepackage{multirow}
\usepackage[OT2,T1]{fontenc}
\usepackage{mathtools}
\usepackage{ytableau}

\hypersetup{
	colorlinks=true, %set true if you want colored links
	linktoc=all, %set to all if you want both sections and subsections linked
	linkcolor=blue} %choose some color if you want links to stand out

\numberwithin{equation}{section}

\theoremstyle{theorem}
\newtheorem{theorem}{Theorem}[section]
\newtheorem*{theorem*}{Theorem}

\newtheorem{lemma}[theorem]{Lemma}

\newtheorem{innercustomgeneric}{\customgenericname}
\providecommand{\customgenericname}{}
\newcommand{\newcustomtheorem}[2]{%
	\newenvironment{#1}[1]
	{%
		\renewcommand\customgenericname{#2}%
		\renewcommand\theinnercustomgeneric{##1}%
		\innercustomgeneric
	}
	{\endinnercustomgeneric}
}
\newcustomtheorem{ctheorem}{Theorem}
\newcustomtheorem{clemma}{Lemma}

\theoremstyle{definition}
\newtheorem{definition}[theorem]{Definition}

\newtheorem*{example*}{Example}
\newtheorem*{examples*}{Examples}

\newtheorem*{remark*}{Remark}
\newtheorem*{remarks*}{Remarks}
\newtheorem*{note*}{Note}

\newtheoremstyle{named}{}{}{\itshape}{}{\bfseries}{.}{.5em}{\thmnote{#3} #1}
\theoremstyle{named}

\newtheoremstyle{customized}{}{}{\itshape}{}{\bfseries}{.}{.5em}{\thmnote{#3}}
\theoremstyle{customized}

\DeclareMathAlphabet{\mydutchcal}{U}{dutchcal}{m}{n}
\newcommand{\ocA}{\operatorname{\mydutchcal{A}}}

\newcommand{\ocR}{\operatorname{\mydutchcal{R}}}
\newcommand{\ocP}{\operatorname{\mydutchcal{P}}}
\newcommand{\ocQ}{\operatorname{\mydutchcal{Q}}}
\newcommand{\ocS}{\operatorname{\mydutchcal{S}}}
\newcommand{\ocTAG}{\operatorname{\mydutchcal{T}_{\mathrm{AG}}}}
\newcommand{\ocTR}{\operatorname{\mydutchcal{T}_{\mathrm{R}}}}

\newcommand{\qbinom}[2]{{\genfrac{[}{]}{0pt}{}{#1}{#2}}}

\newcommand{\RHS}{\operatorname{RHS}}

\newcommand{\GF}{\operatorname{GF}}

\newcommand{\len}{\mathsf{len}}

\newcommand{\mI}{\mathcal{I}}
\newcommand{\cL}{\mathcal{L}}

\newcommand{\bA}{\mathbf{A}}
\newcommand{\bW}{\mathbf{W}}

\newcommand{\sfR}{\mathsf{R}}
\newcommand{\sfG}{\mathsf{G}}
\newcommand{\sfB}{\mathsf{B}}

\newcommand{\ubA}{\mathbf{\underline{A}}}
\newcommand{\uba}{\boldsymbol{\underline{\alpha}}}
\newcommand{\ubb}{\boldsymbol{\underline{\beta}}}
\newcommand{\ubone}{\boldsymbol{\underline{1}}}
\newcommand{\ubtwo}{\boldsymbol{\underline{2}}}

\newcommand{\Mat}{\operatorname{Mat}}
\newcommand{\diag}{\operatorname{diag}}

\title[Russell's three-colored partitions]{Linked partition ideals and Russell's three-colored partitions}

\author[S. Chern]{Shane Chern}
\address{Fakult\"at f\"ur Mathematik, Universit\"at Wien, Oskar-Morgenstern-Platz 1, Wien 1090, Austria}
\email{chenxiaohang92@gmail.com, xiaohangc92@univie.ac.at}

\date{}

\keywords{Linked partition ideals, three-colored partitions, generating functions, Rogers--Ramanujan type identities.}

\subjclass[2020]{11P84, 05A17.}

\begin{document}
	
\sloppy

\maketitle

\begin{abstract}
	Russell recently introduced a family of three-colored partitions in analogy with earlier work by Alladi and Gordon. He showed that their enumerations are surprisingly connected to the classic Rogers--Ramanujan identities. Using the method of linked partition ideals, we elaborate on Russell's results by further counting each part color. Unlike Russell's proofs, our arguments do not require any use of computer algebra systems. With these multivariate generating function relations, we are led to confirm a conjecture of Russell. Furthermore, our results offer a combinatorial understanding of a new triple Rogers--Ramanujan type identity.
\end{abstract}

\bigskip\bigskip

\begin{flushright}
	{\noindent\footnotesize\itshape Dedicated with admiration to Krishnaswami Alladi,\\ who has been bringing partition identities to us in a colorful way.}
\end{flushright}

\bigskip\bigskip

\section{Introduction}

A \emph{partition} is a weakly increasing sequence of positive integers known as \emph{parts}. In this work, we focus on partitions wherein each part is assigned with one of the three colors: \emph{red}, \emph{green}, and \emph{blue}, labeled by subscripts ``$\sfR$'', ``$\sfG$'', and ``$\sfB$''. Given such a three-colored partition $\lambda$, its \emph{size} $|\lambda|$ is the sum of all the parts. In addition, we denote by $\len(\lambda)$ the total number of parts in $\lambda$, and by $\len_\sfR(\lambda)$, $\len_\sfG(\lambda)$, and $\len_\sfB(\lambda)$ the number of red, green, and blue parts, respectively.

One of the starting points of the investigation on three-colored partitions under certain \emph{gap conditions} is the seminal work by Alladi and Gordon~\cite{AG1993,AG1995}. Let $\Omega$ be the set of three-colored partitions such that
\begin{itemize}[leftmargin=*,align=left,itemsep=1pt]
	\item each part size, counting all three colors, appears at most once;
	
	\item the following subpartitions are forbidden for all $j$:
	\begin{align*}
		\begin{array}{llll}
			j_\sfR+(j+1)_\sfR, & j_\sfG+(j+1)_\sfR, & j_\sfB+(j+1)_\sfR, & j_\sfB+(j+1)_\sfG.
		\end{array}
	\end{align*}
\end{itemize}
For $\mathrm{X}$ a partition set and $S$ a collection of parts, let $\mathrm{X}_S$ be the subset of $\mathrm{X}$ consisting of partitions having no parts from $S$. Alladi and Gordon~\cite[pp.~119--120, Lemma~1]{AG1993} proved that
\begin{align}\label{eq:AG-gf}
	&\sum_{\omega\in \Omega_{\{1_\sfR\}}} y_1^{\len_\sfR(\omega)} y_2^{\len_\sfG(\omega)} y_3^{\len_\sfB(\omega)} q^{|\omega|}\notag\\
	&\qquad = \sum_{n_1,n_2,n_3\ge 0} \frac{y_1^{n_1}y_2^{n_2}y_3^{n_3}q^{2\binom{n_1}{2}+\binom{n_2}{2}+\binom{n_3}{2}+n_1n_2+n_2n_3+n_3n_1+2n_1+n_2+n_3}}{(q;q)_{n_1}(q;q)_{n_2}(q;q)_{n_3}},
\end{align}
where the \emph{$q$-Pochhammer symbols} are defined for $n\in\mathbb{N}\cup\{\infty\}$,
\begin{align*}
	(A;q)_n:=\prod_{k=0}^{n-1} (1-A q^k).
\end{align*}

More recently, Russell~\cite{Rus2026} considered another set $\Lambda$ of three-colored partitions constrained by the conditions:
\begin{itemize}[leftmargin=*,align=left,itemsep=1pt]
	\item each part size, counting all three colors, appears at most once;
	
	\item the following subpartitions are forbidden for all $j$:
	\begin{align*}
		\begin{array}{llll}
			j_\sfR+(j+1)_\sfR, & j_\sfR+(j+2)_\sfR, & j_\sfR+(j+1)_\sfB, & j_\sfG+(j+1)_\sfR,\\
			j_\sfG+(j+2)_\sfR, & j_\sfG+(j+1)_\sfB, & j_\sfB+(j+1)_\sfR, & j_\sfB+(j+1)_\sfG.
		\end{array}
	\end{align*}
\end{itemize}
One of the results showed by Russell is \cite[p.~11, eq.~(3.10)]{Rus2026}:
\begin{align}\label{eq:Rus-gf}
	\sum_{\lambda\in \Lambda_{\{1_\sfR,2_\sfR,1_\sfB\}}} x^{\len(\lambda)} q^{|\lambda|} &= \sum_{n_1,n_2,n_3\ge 0} x^{n_1+n_2+n_3}q^{3\binom{n_1}{2}+\binom{n_2}{2}+\binom{n_3}{2}+n_1n_2+n_2n_3+n_3n_1}\notag\\
	&\quad\times \frac{q^{3n_1+n_2+2n_3}(-q;q)_{n_1+n_2+n_3}}{(q^2;q^2)_{n_1}(q^2;q^2)_{n_2}(q^2;q^2)_{n_3}}.
\end{align}
In particular, it exhibits a surprising connection with the classic \emph{Rogers--Ramanujan identities}~\cite{RR1919,Rog1894} according to \cite[p.~2, Theorem~2]{Rus2026}:
\begin{align*}
	\sum_{\lambda\in \Lambda_{\{1_\sfR,2_\sfR,1_\sfB\}}} q^{|\lambda|} = \frac{(-q;q)_\infty}{(q^2;q^5)_\infty (q^3;q^5)_\infty}.
\end{align*}

While Alladi and Gordon established \eqref{eq:AG-gf} by the method of weighted words and Russell established \eqref{eq:Rus-gf} using a certain $q$-difference equation, the enumeration problems for partitions under gap conditions fit well into the framework of \emph{linked partition ideals}, initially developed by Andrews~\cite{And1972} and recently revived in a series of projects mainly led by the author~\cite{ACL2022,Che2020,Che2022a,CL2020} after a semicentennial silence. In this work, we will elaborate on Russell's generating function identities by further counting each part color. More precisely, we have the following theorem.

\begin{theorem}\label{th:main-gf}
	Let
	\begin{align*}
		\GF_{\widehat{\Lambda}}(y_1,y_2,y_3) = \GF_{\widehat{\Lambda}}(y_1,y_2,y_3;q) := \sum_{\lambda\in \widehat{\Lambda}} y_1^{\len_\sfR(\lambda)} y_2^{\len_\sfG(\lambda)} y_3^{\len_\sfB(\lambda)} q^{|\lambda|}
	\end{align*}
	for $\widehat{\Lambda}$ a subset of $\Lambda$ and
	\begin{align*}
		\ocTR(y_1,y_2,y_3) = \ocTR(y_1,y_2,y_3;q) &:= \sum_{n_1,n_2,n_3\ge 0} q^{3\binom{n_1}{2}+\binom{n_2}{2}+\binom{n_3}{2}+n_1n_2+n_2n_3+n_3n_1}\notag\\
		&\;\quad\times \frac{y_1^{n_1}y_2^{n_2}y_3^{n_3}(-q;q)_{n_1+n_2+n_3}}{(q^2;q^2)_{n_1}(q^2;q^2)_{n_2}(q^2;q^2)_{n_3}}.
	\end{align*}
	Then
	\begin{align}
		\GF_{\Lambda}(y_1,y_2,y_3) &= \ocTR(y_1q,y_2q,y_3q^2) + y_3q \ocTR(y_1q^3,y_2q^3,y_3q^2),\label{eq:main-gf-1}\\
		\GF_{\Lambda_{\{1_{\sfR}\}}}(y_1,y_2,y_3) &= \ocTR(y_1q^2,y_2q^2,y_3q) + y_2q \ocTR(y_1q^4,y_2q^2,y_3q^3),\label{eq:main-gf-2}\\
		\GF_{\Lambda_{\{1_\sfR,2_\sfR,1_\sfB\}}}(y_1,y_2,y_3) &= \ocTR(y_1q^3,y_2q,y_3q^2),\label{eq:main-gf-3}\\
		\GF_{\Lambda_{\{1_\sfR,1_\sfG\}}}(y_1,y_2,y_3) &= \ocTR(y_1q^2,y_2q^2,y_3q).\label{eq:main-gf-4}
	\end{align}
\end{theorem}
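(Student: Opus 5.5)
We will prove all four identities with the linked partition ideal framework. Our plan is to derive $q$-difference equations for the generating functions on the combinatorial side, and then verify that the proposed right-hand sides built from $\ocTR$ satisfy the same system with the same initial conditions. Throughout, write $F_S(y_1,y_2,y_3)=\GF_{\Lambda_S}(y_1,y_2,y_3)$ for the relevant forbidden sets $S$.

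\textbf{Step 1: combinatorial functional equations.} Every part size occurs at most once, and the forbidden pairs only involve differences $1$ and $2$. So $\Lambda$ is a linked partition ideal whose span consists of the possible color patterns on two consecutive sizes. We condition on the smallest parts. Deleting the occupant of size $1$ (if any) and shifting all remaining parts down by one replaces $q$ by the substitution $y_i\mapsto y_iq$. This gives a closed linear system of the shape
\begin{align*}
F_{\emptyset}&=F_{\{1_\sfR\}}+y_1q\,F_{\{1_\sfR,2_\sfR,1_\sfB\}}(y_1q,y_2q,y_3q)\quad\text{(schematically)},
\end{align*}
together with analogous relations for $\Lambda_{\{1_\sfR\}}$, $\Lambda_{\{1_\sfR,1_\sfG\}}$ and $\Lambda_{\{1_\sfR,2_\sfR,1_\sfB\}}$. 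The rules used to derive them are as follows.
\begin{itemize}
\item If $1_\sfR$ is present, then $2_\sfR$, $3_\sfR$ and $2_\sfB$ are forbidden.
\item If $1_\sfG$ is present, then $2_\sfR$, $3_\sfR$ and $2_\sfB$ are forbidden.
\item If $1_\sfB$ is present, then $2_\sfR$ and $2_\sfG$ are forbidden.
\end{itemize}
Each such conditioning produces a shifted copy of one of finitely many "forbidden-initial-set" families. We will enumerate these states, possibly adding auxiliary ones such as $\Lambda_{\{1_\sfR,1_\sfG,2_\sfR\}}$ or $\Lambda_{\{1_\sfR,1_\sfG\}}$, until the system closes. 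Once it does, we eliminate the auxiliary states and keep only relations among the four functions in the theorem.

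\textbf{Step 2: analytic functional equations for $\ocTR$.} For the sum side, the strategy is to find $q$-difference relations for $\ocTR$ under $y_1\mapsto y_1q^2$, $y_2\mapsto y_2q^2$ and $y_3\mapsto y_3q^2$. The tools are the standard splitting $1/(q^2;q^2)_{n}=1/(q^2;q^2)_{n-1}\cdot 1/(1-q^{2n})$, i.e.\ $\frac{q^{2n}}{(q^2;q^2)_n}=\frac{1}{(q^2;q^2)_n}-\frac{1}{(q^2;q^2)_{n-1}}$, together with $(-q;q)_{N}=(-q;q)_{N-1}(1+q^N)$. For instance, comparing $\ocTR(y_1,\dots)$ with $\ocTR(y_1q^2,\dots)$ shifts $n_1\to n_1+1$. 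The quadratic form $3\binom{n_1}{2}+\dots+n_1n_2+\dots$ then produces shifts $q^{3n_1+n_2+n_3}$, and the factor $(1+q^{N+1})$ splits into two terms. The outcome should be a family of identities of the type
\[
\ocTR(y_1,y_2,y_3)-\ocTR(y_1q^2,y_2,y_3)=y_1q^{?}\bigl[\ocTR(\cdot q^{a})+q^{?}\ocTR(\cdot q^{b})\bigr],
\]
and similarly in $y_2$ and $y_3$. We then define candidate functions $G_S$ to be the right-hand sides of \eqref{eq:main-gf-1}--\eqref{eq:main-gf-4}, together with whatever shifted $\ocTR$-expressions the auxiliary states require. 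The task is to check that the $G_S$ satisfy the Step 1 system using only these relations.

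\textbf{Step 3: uniqueness.} Both sides are formal power series in $q$ whose coefficients are polynomials in the $y_i$, and both equal $1$ at $q=0$. In the system, every term on the right except the leading one carries a positive power of $q$ times a function evaluated at $y_iq^{\ge1}$. Iterating the system therefore determines all coefficients uniquely, so the two sides agree.

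\textbf{Main obstacle.} The main difficulty is Step 2: choosing the right auxiliary states and matching them with suitable shifted combinations of $\ocTR$. The factor $(-q;q)_{n_1+n_2+n_3}$ does not factor over the three indices, so the relations mix all three variables at once. We would first guess the auxiliary $\ocTR$-expressions by expanding both sides to low order in $q$, for example by comparing small-size partitions. We would then verify each guessed relation by the termwise manipulation above, separating the $(1+q^N)$ factor into its two pieces.
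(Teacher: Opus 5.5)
Your Step~1 is correct. In fact the three rules close on exactly the four sets of the theorem, so no auxiliary states are needed. Writing $F_S(yq)$ for $F_S(y_1q,y_2q,y_3q)$, the equation for $\Lambda$ is
\[
F_{\emptyset}(y)=F_{\emptyset}(yq)+(y_1+y_2)q\,F_{\{1_\sfR,2_\sfR,1_\sfB\}}(yq)+y_3q\,F_{\{1_\sfR,1_\sfG\}}(yq),
\]
and the other three equations are analogous.

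\textbf{The gap is in Step~2.} Put $\ocR(\beta_1,\beta_2,\beta_3):=\ocTR(y_1q^{\beta_1},y_2q^{\beta_2},y_3q^{\beta_3})$. Your candidates are then $\ocR(1,1,2)+y_3q\,\ocR(3,3,2)$, $\ocR(2,2,1)+y_2q\,\ocR(4,2,3)$, $\ocR(3,1,2)$ and $\ocR(2,2,1)$, and $y\mapsto yq$ acts by $\ocR(\beta)\mapsto\ocR(\beta+(1,1,1))$. Your single-variable relations are exactly the paper's Lemma~\ref{le:rec-for-P}:
\[
\ocR(\beta)=\ocR(\beta+2e_r)+y_rq^{\beta_r}\ocR(\beta+a_r)+y_rq^{\beta_r+1}\ocR(\beta+a_r+(1,1,1)),
\]
where $e_r$ is the $r$th unit vector, $a_1=(3,1,1)$ and $a_2=a_3=(1,1,1)$. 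One instance of this settles each of the equations for $\Lambda_{\{1_\sfR\}}$, $\Lambda_{\{1_\sfR,2_\sfR,1_\sfB\}}$ and $\Lambda_{\{1_\sfR,1_\sfG\}}$.

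The displayed equation for $\Lambda$, however, is \emph{not} a formal linear combination of these relations (treating the $\ocR(\beta)$ as symbols, with coefficients polynomial in $y_1,y_2,y_3$). The reason is a parity obstruction:
\begin{itemize}
\item The $y$-free part of each relation is $\ocR(\beta)-\ocR(\beta+2e_r)$.
\item This is killed by the functional $\ocR(\beta)\mapsto\epsilon_1^{\beta_1}\epsilon_2^{\beta_2}\epsilon_3^{\beta_3}$ for every choice of signs $\epsilon_i=\pm1$.
\item The $y$-free part of the $\Lambda$ equation is $\ocR(1,1,2)-\ocR(2,2,3)$, which $\epsilon=(1,1,-1)$ sends to $2$.
\end{itemize}
Concretely, after using your relations at $(r,\beta)=(1,(1,1,2)),(2,(3,1,2)),(3,(3,3,2))$, the $\Lambda$ equation is equivalent to
\[
\ocR(2,2,3)=\ocR(3,3,4)+(y_1+y_2)q^2\,\ocR(5,3,4)+y_3q^3\,\ocR(5,5,4).
\]
This is a relation under the uniform shift by $(1,1,1)$, which Step~2 never produces. ``Splitting $(1+q^N)$ into its two pieces'' cannot reach it. So the real obstacle is not the choice of auxiliary states.

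\textbf{What is missing} is the paper's Lemma~\ref{le:rec-for-Q}, proved as follows.
\begin{itemize}
\item Termwise, $\ocR(\beta)-\ocR(\beta+(1,1,1))$ carries the factor $(1-q^N)(-q;q)_N=(1-q^{2N})(-q;q)_{N-1}$, with $N=n_1+n_2+n_3$.
\item Telescope $1-q^{2N}=(1-q^{2n_1})+q^{2n_1}(1-q^{2n_2})+q^{2n_1+2n_2}(1-q^{2n_3})$.
\item Cancel each piece against the corresponding $(q^2;q^2)_{n_k}$ and shift $n_k\mapsto n_k+1$. This restores $(-q;q)_N$ \emph{without} splitting it.
\end{itemize}
There is an equivalent route. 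Let $h(y)$ be the difference of the two sides above. Then $h(y)+h(yq)$ is exactly the sum of your relations at $(r,\beta)=(1,(2,2,3)),(2,(4,2,3)),(3,(4,4,3))$, so $h(y)+h(yq)=0$. This forces $h=0$, since the coefficient of $y_1^{n_1}y_2^{n_2}y_3^{n_3}$ gets multiplied by $1+q^{n_1+n_2+n_3}$.

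With this one identity added, your route is complete. It is leaner than the paper's $10$-block modulus-$2$ system, which needs the very same instance of Lemma~\ref{le:rec-for-Q} at $\beta=(2,2,3)$.

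For Step~3, run the uniqueness induction on total degree in the $y_i$, not on the power of $q$. The degree-$d$ component satisfies $(I-q^dC)\Phi_d=(\text{lower-degree terms})$ with $C$ a constant $0/1$ matrix, so it is determined for $d\ge1$. All four series have constant term $1$, which fixes $d=0$.
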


These generating function identities will be placed within the framework of linked partition ideals. For this purpose, we use Alladi and Gordon's three-colored partitions as a warm-up to digest the concept of this setting in Section~\ref{sec:AG}. Then we move on to Russell's partitions in Section~\ref{sec:R}. As one may have already seen, the triple sums in \eqref{eq:main-gf-1}--\eqref{eq:main-gf-4} differ from that in \eqref{eq:AG-gf} by the extra factor $(-q;q)_{n_1+n_2+n_3}$ in the numerator. Taking account of the impact of this factor, in Section~\ref{sec:q-sum} we work out recurrences for generic multiple $q$-summations of this form, from which we finalize the proof of Theorem~\ref{th:main-gf} in Section~\ref{sec:GF}. Unlike Russell's proofs, our arguments do not require any use of computer algebra systems.

Notably, the multivariate generating functions in Theorem~\ref{th:main-gf} have several interesting implications. First, we are led to prove a conjecture of Russell~\cite[p.~13, Conjecture~3]{Rus2026} in Section~\ref{sec:R-conj}.

\begin{theorem}[Russell's conjecture]\label{th:Russell-conj}
	We have
	\begin{align}\label{eq:Russell-conj}
		\GF_{\Lambda_{\{1_{\sfR}\}}}(y^2,y,y) = (1+yq) \ocTR(y^2q^4,yq^2,yq).
	\end{align}
\end{theorem}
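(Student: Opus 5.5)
We derive \eqref{eq:Russell-conj} from the multivariate identity \eqref{eq:main-gf-2} of Theorem~\ref{th:main-gf}. We specialize $(y_1,y_2,y_3)=(y^2,y,y)$ there, which gives
\begin{align*}
\GF_{\Lambda_{\{1_{\sfR}\}}}(y^2,y,y) = \ocTR(y^2q^2,yq^2,yq) + yq\, \ocTR(y^2q^4,yq^2,yq^3).
\end{align*}
Comparing with the right-hand side of \eqref{eq:Russell-conj}, it then suffices to prove the purely $q$-series identity
\begin{align*}
\ocTR(y^2q^2,yq^2,yq) - \ocTR(y^2q^4,yq^2,yq) = yq\bigl(\ocTR(y^2q^4,yq^2,yq)-\ocTR(y^2q^4,yq^2,yq^3)\bigr).
\end{align*}

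The natural tool is a family of first-order $q$-difference relations for $\ocTR$ in each variable separately. We obtain these from the summand directly: factor $1-q^{2n_i}$ out of $(q^2;q^2)_{n_i}$ and shift $n_i\mapsto n_i+1$. This gives
\begin{align*}
\ocTR(y_1,y_2,y_3)-\ocTR(y_1q^2,y_2,y_3),
\end{align*}
expressed as $y_1$ times $\ocTR$ at shifted arguments, with an extra factor $(1+q^{n_1+n_2+n_3+1})$ coming from the $(-q;q)_{n_1+n_2+n_3}$ numerator. That extra factor splits the result into two $\ocTR$ terms. We derive analogous relations in $y_2$ and $y_3$; these are presumably the recurrences for generic multiple sums that the paper develops in Section~\ref{sec:q-sum}, and we would invoke them there.

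On the left side, the $y_1$-relation turns $\ocTR(y^2q^2,\cdot)-\ocTR(y^2q^4,\cdot)$ into
\begin{align*}
y^2q^2\bigl[\ocTR(y^2q^5,yq^3,yq^2)+q\,\ocTR(y^2q^6,yq^4,yq^3)\bigr],
\end{align*}
where the exponent bookkeeping follows from $3\binom{n_1+1}{2}=3\binom{n_1}{2}+3n_1$ and the cross terms $n_1n_2+n_1n_3$. On the right side, the $y_3$-relation turns $\ocTR(\cdot,yq)-\ocTR(\cdot,yq^3)$ into $yq$ times two terms, with shifts $q^{n_1+n_2}$ and so on. Comparing the two sides, we would then need one or two further applications of the relations to match four $\ocTR$ terms. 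If they do not match directly, the fallback is to pass to coefficients: extract $[y^N]$ on both sides and compare the resulting finite double sums over $n_1$ with $n_2+n_3=N-2n_1$. We would sum over $n_2+n_3$ by the $q$-binomial theorem, $\sum_{n_2+n_3=m} q^{\binom{n_2}{2}+\binom{n_3}{2}+n_2n_3+\dots}/((q^2;q^2)_{n_2}(q^2;q^2)_{n_3})$, which becomes a single Gaussian-type sum, and then finish with a two-term identity.

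The main obstacle is this matching step. Differences in single variables naturally produce shifts in all three variables simultaneously, so closing the system may require using the $y_1$-, $y_2$- and $y_3$-relations in a coordinated way. A useful observation here is that the right-hand side has $y_2=y_3$ up to powers of $q$, which suggests symmetrizing in $(n_2,n_3)$. Failing that, the combinatorial route is to use the linked partition ideal structure of Section~\ref{sec:R}: show directly that $\GF_{\Lambda_{\{1_\sfR\}}}(y_1,y_2,y_3)$ with $y_1=y_2y_3$ factors as $(1+y_3q)$ times the generating function \eqref{eq:main-gf-4}-type sum with the appropriate shifts. This would be done through the span/transfer-matrix recurrences already established there.
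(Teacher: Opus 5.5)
Your plan is the paper's proof. The reduction via \eqref{eq:main-gf-2} and the $y_1$-step (Lemma~\ref{le:rec-for-P} with $r=1$) are exactly what the paper does. The gap is that you stopped one line short. You left the $y_3$-step vague, doubted that the two sides would match, and offered fallbacks (coefficient extraction, a combinatorial route). None of these is needed, because the matching is immediate.

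Write out the $y_3$-relation (Lemma~\ref{le:rec-for-P} with $r=3$). After the shift $n_3\mapsto n_3+1$, the summand acquires the factor $y_3q^{n_1+n_2+n_3}(1+q^{n_1+n_2+n_3+1})$. This is $q^{n_1+n_2+n_3}$, not $q^{n_1+n_2}$. At the arguments $(y^2q^4,yq^2,yq)$ this gives
\[
\ocTR(y^2q^4,yq^2,yq)-\ocTR(y^2q^4,yq^2,yq^3)=yq\bigl[\ocTR(y^2q^5,yq^3,yq^2)+q\,\ocTR(y^2q^6,yq^4,yq^3)\bigr].
\]
Multiplying by $yq$ gives $y^2q^2\bigl[\ocTR(y^2q^5,yq^3,yq^2)+q\,\ocTR(y^2q^6,yq^4,yq^3)\bigr]$. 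This is term-by-term what you obtained on the left side, so there are only two distinct sums, not four, and the proof is complete.

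The reason it closes is structural, and it resolves your ``main obstacle''. The weight $y^2$ attached to $n_1$ is $y$ times the weight attached to $n_3$. Also, the row $\uba_1=(3,1,1)$ exceeds $\uba_3=(1,1,1)$ by exactly $(2,0,0)$. That is precisely the offset between the $q$-exponent vectors $(2,2,1)$ and $(4,2,1)$ of the two starting sums. So both difference relations land on the same pair of shifted sums.

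A side remark on your combinatorial fallback: the same computation shows that for general $y_1=y_2y_3$ the correct statement is
\[
\GF_{\Lambda_{\{1_\sfR\}}}(y_2y_3,y_2,y_3)=(1+y_2q)\,\ocTR(y_2y_3q^4,y_2q^2,y_3q).
\]
The factor is $1+y_2q$, not $1+y_3q$. You can check this at order $q^1$, where the only contributions are $1_\sfG$ and $1_\sfB$. The two factors agree here only because $y_2=y_3=y$.
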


Furthermore, we will establish the following Rogers--Ramanujan type identity in Section~\ref{sec:RR-id} --- although it can be proven by $q$-hypergeometric means, we show how \eqref{eq:main-gf-4} may offer a simple \emph{combinatorial} understanding of this identity.

\begin{theorem}\label{th:RR-1}
	We have
	\begin{align}\label{eq:RR-1}
		&(-y;q)_\infty\notag\\
		&\  = \sum_{n_1,n_2,n_3\ge 0} \frac{(-1)^{n_1}x^{n_1+n_2}y^{n_3}q^{3\binom{n_1}{2}+\binom{n_2}{2}+\binom{n_3}{2}+n_1n_2+n_2n_3+n_3n_1}(-q;q)_{n_1+n_2+n_3}}{(q^2;q^2)_{n_1}(q^2;q^2)_{n_2}(q^2;q^2)_{n_3}}.
	\end{align}
\end{theorem}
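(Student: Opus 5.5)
The plan is to derive \eqref{eq:RR-1} from \eqref{eq:main-gf-4} by a sign-reversing involution.

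\textbf{Rewriting the sum.} The right-hand side of \eqref{eq:RR-1} is exactly $\ocTR(-x,x,y)$. Substituting $y_1=-xq^{-2}$, $y_2=xq^{-2}$, $y_3=yq^{-1}$ in \eqref{eq:main-gf-4} gives
\begin{align*}
\ocTR(-x,x,y)=\GF_{\Lambda_{\{1_\sfR,1_\sfG\}}}(-xq^{-2},xq^{-2},yq^{-1}).
\end{align*}
So it suffices to show that this specialized generating function equals $(-y;q)_\infty$. Under this specialization, each part of $\lambda\in\Lambda_{\{1_\sfR,1_\sfG\}}$ contributes a factor depending on its colour:
\begin{itemize}
\item a red part $j_\sfR$ contributes $-xq^{j-2}$;
\item a green part $j_\sfG$ contributes $xq^{j-2}$;
\item a blue part $j_\sfB$ contributes $yq^{j-1}$.
\end{itemize}
In particular, a red part and a green part of the same size carry opposite weights.

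\textbf{The involution.} Define $\iota$ on partitions in $\Lambda_{\{1_\sfR,1_\sfG\}}$ that contain at least one red or green part. Let $j$ be the smallest size carrying a red or green part, and let $\iota$ switch the colour of that part between $\sfR$ and $\sfG$. Clearly $\iota$ is an involution, it reverses the sign of the weight, and it does not change any other factor.

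\textbf{Well-definedness (the main check).} We must verify that $\iota(\lambda)$ stays in $\Lambda_{\{1_\sfR,1_\sfG\}}$.
\begin{itemize}
\item Both $1_\sfR$ and $1_\sfG$ are excluded, so the exclusion of parts of size $1$ is preserved.
\item Distinctness of part sizes is unaffected by recolouring.
\item Forbidden pairs in which $j$ is the smaller part: $j_\sfR$ forbids $(j+1)_\sfR,(j+2)_\sfR,(j+1)_\sfB$, and $j_\sfG$ forbids exactly the same three parts. So these constraints are identical for the two colours.
\item Forbidden pairs in which $j$ is the larger part: every part smaller than $j$ is blue, by the minimality of $j$. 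The only forbidden pairs with a blue smaller part and larger part $j_\sfR$ or $j_\sfG$ are $(j-1)_\sfB+j_\sfR$ and $(j-1)_\sfB+j_\sfG$, which constrain both colours equally. The asymmetric constraints, namely those from $(j-1)$ or $(j-2)$ coloured $\sfR$ or $\sfG$ forbidding $j_\sfR$, cannot occur.
\end{itemize}
This case analysis against the eight forbidden pairs defining $\Lambda$ is the only delicate step. It works because we choose the smallest red or green part; choosing, say, the largest one would fail.

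\textbf{Fixed points.} The partitions not paired by $\iota$ are those with only blue parts. No blue–blue pair is forbidden, so these are exactly the partitions into distinct parts $j\ge1$, each coloured blue. Their total weight is
\begin{align*}
\prod_{j\ge1}\bigl(1+yq^{j-1}\bigr)=(-y;q)_\infty,
\end{align*}
which proves \eqref{eq:RR-1}.
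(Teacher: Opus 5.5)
Your proposal is correct and is essentially the paper's combinatorial proof: specialize \eqref{eq:main-gf-4}, then cancel terms with the involution that swaps red and green on the smallest non-blue part, leaving the all-blue partitions into distinct parts. The differences are cosmetic: you apply the substitution $(y_1,y_2,y_3)\mapsto(-xq^{-2},xq^{-2},yq^{-1})$ before the cancellation rather than after, and you explicitly check the involution against the eight forbidden pairs, which the paper only asserts.
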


\section{Linked partition ideals: Alladi and Gordon's partitions, a warm-up}\label{sec:AG}

The framework of linked partition ideals was originally designed for ordinary partitions. Here we slightly adjust the definition in \cite[Chapter~8]{And1998} or \cite[p.~5, Definition~2.1]{ACL2022} to accommodate colored partitions; a more general setting was provided in \cite[Section~2]{Che2020}.

\begin{definition}
	Suppose we are given
	\begin{itemize}[leftmargin=*,align=left,itemsep=1pt]
		\renewcommand{\labelitemi}{\scriptsize$\blacktriangleright$}
		
		\item a finite \emph{base set} $\Pi=\{\pi_1,\pi_2,\ldots,\pi_K\}$ of three-colored partitions with $\pi_1=\varnothing$, the empty partition,
		
		\item a \textit{linking map}, $\cL:\Pi\to P(\Pi)$, the power set of $\Pi$, with especially, $\cL(\pi_1)=\cL(\varnothing)=\Pi$ and $\pi_1=\varnothing\in \cL(\pi_k)$ for every $k$ with $1\le k\le K$,
		
		\item and a positive integral \textit{modulus} $T$, which is no smaller than the largest part among all three-colored partitions in $\Pi$.
	\end{itemize}
	We say a \textit{span one linked partition ideal} $\mI=\mI(\langle\Pi,\cL\rangle,T)$ is the collection of all three-colored partitions of the form
	\begin{align}\label{eq:decomp}
		\lambda&=\phi^0(\lambda_0)\oplus \phi^T(\lambda_1)\oplus \cdots \oplus \phi^{NT}(\lambda_N)\oplus \phi^{(N+1)T}(\pi_1)\oplus \phi^{(N+2)T}(\pi_1)\oplus \cdots\notag\\
		&=\phi^0(\lambda_0)\oplus \phi^T(\lambda_1)\oplus \cdots \oplus \phi^{NT}(\lambda_N),
	\end{align}
	where $\lambda_i\in\cL(\lambda_{i-1})$ for each $i$ and $\lambda_N$ is not the empty partition; we also include in $\mI$ the empty partition, which corresponds to $\phi^{0}(\pi_1)\oplus \phi^{T}(\pi_1)\oplus \cdots$. Here $\phi^m(\mu)$ gives a three-colored partition by adding $m$ to each part of the three-colored partition $\mu$ with colors preserved. Meanwhile, for any two three-colored partitions $\mu$ and $\nu$, $\mu\oplus\nu$ gives a new three-colored partition by collecting all parts in $\mu$ and $\nu$.
\end{definition}

It is clear from the gap conditions for Alladi and Gordon's partitions that $\Omega$ is identical to the span one linked partition ideal $\mI=\mI(\langle\Pi,\cL\rangle,1)$ with
\begin{align*}
	\Pi = \{\pi_1,\pi_2,\pi_3,\pi_4\} = \{\varnothing,1_\sfR,1_\sfG,1_\sfB\},
\end{align*}
and
\begin{align*}
	\cL(\pi_i) = \begin{cases}
		\{\pi_1,\pi_2,\pi_3,\pi_4\}, & i=1,\\
		\{\pi_1,\pi_3,\pi_4\}, & i=2,3,\\
		\{\pi_1,\pi_4\}, & i=4.
	\end{cases}
\end{align*}
As an example, the three-colored partition $1_\sfR+2_\sfB+4_\sfG+5_\sfG+8_\sfR$ in $\Omega$ takes the following form in $\mI$:
\begin{align*}
	\phi^0(1_\sfR) \oplus \phi^1(1_\sfB) \oplus \phi^2(\varnothing) \oplus \phi^3(1_\sfG) \oplus \phi^4(1_\sfG) \oplus \phi^5(\varnothing) \oplus \phi^6(\varnothing) \oplus \phi^7(1_\sfR).
\end{align*}

With such a decomposition in \eqref{eq:decomp}, we define $G_k(x)$ as the generating function for three-colored partitions in $\Omega$ whose first decomposed block equals $\pi_k$; that is,
\begin{align*}
	G_k(x)=G_k(x,y_1,y_2,y_3,q):=\sum_{\substack{\lambda\in\Omega\\\lambda_0=\pi_k}} x^{\len(\lambda)} y_1^{\len_\sfR(\lambda)} y_2^{\len_\sfG(\lambda)} y_3^{\len_\sfB(\lambda)} q^{|\lambda|}.
\end{align*}
It is plain from our construction that
\begin{align*}
	G_k(x)=x^{\len(\pi_k)} y_1^{\len_\sfR(\pi_k)} y_2^{\len_\sfG(\pi_k)} y_3^{\len_\sfB(\pi_k)} q^{|\pi_k|} \sum_{j:\pi_j\in\cL(\pi_k)} G_j(xq),
\end{align*}
so that we have a $q$-difference system
\begin{equation*}
	\begin{pmatrix}
		G_1(x)\\
		G_2(x)\\
		G_3(x)\\
		G_4(x)
	\end{pmatrix}
	=
	\begin{pmatrix}
		1\\
		& xy_1q\\
		&& xy_2q\\
		&&& xy_3q
	\end{pmatrix}.\begin{pmatrix}
	1 & 1 & 1 & 1\\
	1 & 0 & 1 & 1\\
	1 & 0 & 1 & 1\\
	1 & 0 & 0 & 1
	\end{pmatrix}.
	\begin{pmatrix}
		G_1(xq)\\
		G_2(xq)\\
		G_3(xq)\\
		G_4(xq)
	\end{pmatrix}.
\end{equation*}
If we further introduce
\begin{align*}
	\begin{pmatrix}
		F_1(x)\\
		F_2(x)\\
		F_3(x)\\
		F_4(x)
	\end{pmatrix} = \begin{pmatrix}
	1 & 1 & 1 & 1\\
	1 & 0 & 1 & 1\\
	1 & 0 & 1 & 1\\
	1 & 0 & 0 & 1
	\end{pmatrix}.
	\begin{pmatrix}
	G_1(x)\\
	G_2(x)\\
	G_3(x)\\
	G_4(x)
	\end{pmatrix},
\end{align*}
then the $q$-difference system for the $F$'s is
\begin{equation}\label{eq:AG-F-q-diff}
	\begin{pmatrix}
		F_1(x)\\
		F_2(x)\\
		F_3(x)\\
		F_4(x)
	\end{pmatrix}
	=
	\begin{pmatrix}
		1 & 1 & 1 & 1\\
		1 & 0 & 1 & 1\\
		1 & 0 & 1 & 1\\
		1 & 0 & 0 & 1
	\end{pmatrix}.\begin{pmatrix}
		1\\
		& xy_1q\\
		&& xy_2q\\
		&&& xy_3q
	\end{pmatrix}.
	\begin{pmatrix}
		F_1(xq)\\
		F_2(xq)\\
		F_3(xq)\\
		F_4(xq)
	\end{pmatrix}.
\end{equation}

Now our key observation is that for $\widehat{\Omega}$ a subset of $\Omega$, if we define
\begin{align*}
	\GF_{\widehat{\Omega}}(x) = \GF_{\widehat{\Omega}}(x,y_1,y_2,y_3;q) := \sum_{\lambda\in \widehat{\Omega}} x^{\len(\lambda)} y_1^{\len_\sfR(\lambda)} y_2^{\len_\sfG(\lambda)} y_3^{\len_\sfB(\lambda)} q^{|\lambda|},
\end{align*}
then
\begin{align*}
	\GF_{\Omega}(x) &= \sum_{k\in\{1,2,3,4\}} G_k(x) = F_1(x),\\
	\GF_{\Omega_{\{1_\sfR\}}}(x) &= \sum_{k\in\{1,3,4\}} G_k(x) = F_2(x) = F_3(x),\\
	\GF_{\Omega_{\{1_\sfR,1_\sfG\}}}(x) &= \sum_{k\in\{1,4\}} G_k(x) = F_4(x).
\end{align*}
Hence, it remains to find the \emph{unique} power series solution, as asserted by \cite[p.~12, Proposition~15]{Che2020}, to the system \eqref{eq:AG-F-q-diff} under the boundary condition $F_1(0) = F_2(0) = F_3(0) = F_4(0) = 1$.

\begin{theorem}\label{th:AG-gf}
	Let
	\begin{align*}
		\ocTAG(x,y_1,y_2,y_3;q) &:= \sum_{n_1,n_2,n_3\ge 0} q^{2\binom{n_1}{2}+\binom{n_2}{2}+\binom{n_3}{2}+n_1n_2+n_2n_3+n_3n_1}\notag\\
		&\;\quad\times \frac{x^{n_1+n_2+n_3}y_1^{n_1}y_2^{n_2}y_3^{n_3}}{(q;q)_{n_1}(q;q)_{n_2}(q;q)_{n_3}}.
	\end{align*}
	Then
	\begin{align}
		\GF_{\Omega}(x) &= \ocTAG(x,y_1q,y_2q,y_3q;q),\label{eq:AG-gf-1}\\
		\GF_{\Omega_{\{1_{\sfR}\}}}(x) &= \ocTAG(x,y_1q^2,y_2q,y_3q;q),\label{eq:AG-gf-2}\\
		\GF_{\Omega_{\{1_\sfR,1_\sfG\}}}(x) &= \ocTAG(x,y_1q^2,y_2q^2,y_3q;q).\label{eq:AG-gf-3}
	\end{align}
\end{theorem}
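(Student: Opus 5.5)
The plan is to verify directly that the proposed triple sums satisfy the $q$-difference system \eqref{eq:AG-F-q-diff} together with the boundary condition $F_k(0)=1$. Uniqueness of the power series solution, \cite[p.~12, Proposition~15]{Che2020}, then identifies them with $F_1,F_2=F_3,F_4$, and hence with the three generating functions listed before the statement.

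Write $T(y_1,y_2,y_3)$ for $\ocTAG(x,y_1,y_2,y_3;q)$, suppressing $x$ and $q$. The candidates are
\begin{align*}
F_1 &= T(y_1q,y_2q,y_3q), & F_2 = F_3 &= T(y_1q^2,y_2q,y_3q), & F_4 &= T(y_1q^2,y_2q^2,y_3q).
\end{align*}
At $x=0$ only the term $n_1=n_2=n_3=0$ survives, so each equals $1$. Since $x$ enters only through $x^{n_1+n_2+n_3}$, replacing $x$ by $xq$ is the same as multiplying each $y_i$ by $q$. This turns every right-hand side of \eqref{eq:AG-F-q-diff} into an expression in $T$ evaluated at shifted $y$'s, with $x$ unchanged.

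The key step is three first-order ``colour-removal'' relations for $T$. Take the difference of $T$ at $y_i$ and at $y_iq$. In the $y_i$-summation this multiplies the summand by $1-q^{n_i}$, which cancels one factor of $(q;q)_{n_i}$. Then shift $n_i\mapsto n_i+1$ and expand the quadratic exponent: $2\binom{n_1+1}{2}-2\binom{n_1}{2}=2n_1$ and $\binom{n_j+1}{2}-\binom{n_j}{2}=n_j$. This gives
\begin{align*}
T(y_1,y_2,y_3)-T(y_1q,y_2,y_3)&=xy_1\,T(y_1q^2,y_2q,y_3q),\\
T(y_1,y_2,y_3)-T(y_1,y_2q,y_3)&=xy_2\,T(y_1q,y_2q,y_3q),\\
T(y_1,y_2,y_3)-T(y_1,y_2,y_3q)&=xy_3\,T(y_1q,y_2q,y_3q).
\end{align*}

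Each row of \eqref{eq:AG-F-q-diff} is now a telescoping chain. Row $4$ asks for $F_4(x)=F_1(xq)+xy_3qF_4(xq)$, that is,
\[
T(y_1q^2,y_2q^2,y_3q)=T(y_1q^2,y_2q^2,y_3q^2)+xy_3q\,T(y_1q^3,y_2q^3,y_3q^2).
\]
This is exactly the $y_3$-relation evaluated at $(y_1q^2,y_2q^2,y_3q)$. Rows $2$ and $3$ are identical, and each needs two steps: raise $y_2$ from $q$ to $q^2$ with the $y_2$-relation, producing the $F_3(xq)$ term, and then apply the row-$4$ identity. Row $1$ needs three steps: first raise $y_1$ with the $y_1$-relation, producing $xy_1qF_2(xq)$, then continue as in row $2$.

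The only real work is the bookkeeping: at each step one must check that the shifted arguments produced by the relation match the arguments of $F_k(xq)$ exactly. The point most likely to go wrong is the extra factor $q^{2n_1}$ in the $y_1$-relation, which comes from the coefficient $2$ in front of $\binom{n_1}{2}$. It is precisely what makes the $y_1$-argument jump by $q^2$ rather than $q$, and this is why $F_2$ carries $y_1q^2$.
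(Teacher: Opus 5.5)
Your proposal is correct and follows essentially the same route as the paper. Your three colour-removal relations are exactly the recurrence \eqref{eq:S-rec} at $r=3,2,1$, with the factor $q^{\beta_i}$ absorbed into $y_i$, and you apply them in the same telescoping order as the paper (row $4$ first, then row $2$ via row $4$, then row $1$ via row $2$), closing with the uniqueness result of \cite{Che2020}. The only differences are cosmetic: you derive the recurrence directly rather than citing it, and you track shifts in the $y_i$ rather than in the exponent parameters $\beta_i$.
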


To establish these relations, we start with a family of generic multiple $q$-summations. Let $R$ be a fixed positive integer. We fix a symmetric matrix $\uba=(\alpha_{i,j})\in\Mat_{R\times R}(\mathbb{Z})$ and a vector $\ubA=(A_i)\in \mathbb{Z}_{>0}^R$. We also fix $J$ vectors $\boldsymbol{\underline{\gamma}^{(j)}}=(\gamma_{i}^{(j)})\in \mathbb{Z}^R$ for $j=1,2,\ldots,J$. Let $x_1,x_2,\ldots,x_J$ and $q$ be indeterminates and define the following formal multiple $q$-summation $\ocS(\ubb)=\ocS(\beta_1,\ldots,\beta_R)$ for $\ubb\in\mathbb{Z}^R$:
\begin{align}\label{eq:S-beta-sum-ref}
	\ocS(\ubb)&:=\sum_{n_1,\ldots,n_R\ge 0}q^{\sum_{i=1}^R \alpha_{i,i}\binom{n_i}{2}+\sum_{1\le i< j\le R}\alpha_{i,j}n_i n_j+ \sum_{i=1}^R \beta_i n_i}\notag\\
	&\;\quad\times \frac{x_1^{\sum_{i=1}^R \gamma_{i}^{(1)} n_i}\cdots x_J^{\sum_{i=1}^R \gamma_{i}^{(J)} n_i}}{(q^{A_1};q^{A_1})_{n_1}\cdots (q^{A_R};q^{A_R})_{n_R}}.
\end{align}
The key ingredient we need is \cite[p.~2015, Lemma~2.1]{Che2022a}:
\begin{align}\label{eq:S-rec}
	\ocS(\ubb)=\ocS(\ubb+\ubA_{\delta_r})+x_1^{\gamma_{r}^{(1)}}\cdots x_J^{\gamma_{r}^{(J)}}q^{\beta_r}\ocS(\ubb+\uba_r),
\end{align}
which is valid for every $r$ with $1\le r\le R$. Here we introduce the auxiliary vectors
\begin{align}\label{eq:vec-1}
	\uba_r:=(\alpha_{r,1},\ldots,\alpha_{r,R}), \qquad\qquad \ubA_{\delta_r} := (\delta_{r,1} A_1,\ldots, \delta_{r,R} A_R),
\end{align}
where the Kronecker delta $\delta_r$ is given by
\begin{align*}
	\delta_{r,i} := \begin{cases}
		1, & \text{if $i=r$},\\
		0, & \text{if $i\ne r$}.
	\end{cases}
\end{align*}

\begin{proof}[Proof of Theorem~\ref{th:AG-gf}]
	We specify $\ocS(\ubb)$ in \eqref{eq:S-beta-sum-ref} as
	\begin{align*}
		\ocA(\beta_1,\beta_2,\beta_3) &:= \sum_{n_1,n_2,n_3\ge 0} q^{2\binom{n_1}{2}+\binom{n_2}{2}+\binom{n_3}{2}+n_1n_2+n_2n_3+n_3n_1}\notag\\
		&\;\quad\times \frac{x^{n_1+n_2+n_3}y_1^{n_1}y_2^{n_2}y_3^{n_3}q^{\beta_1n_1+\beta_2n_2+\beta_3n_3}}{(q;q)_{n_1}(q;q)_{n_2}(q;q)_{n_3}}.
	\end{align*}
	Note that after the dilation $x\mapsto xq$, the sum $\ocA(\beta_1,\beta_2,\beta_3)$ becomes $\ocA(\beta_1+1,\beta_2+1,\beta_3+1)$. It is sufficient to show that
	\begin{equation*}
		\begin{pmatrix}
			\ocA(1,1,1)\\
			\ocA(2,1,1)\\
			\ocA(2,1,1)\\
			\ocA(2,2,1)
		\end{pmatrix}
		=
		\begin{pmatrix}
			1 & 1 & 1 & 1\\
			1 & 0 & 1 & 1\\
			1 & 0 & 1 & 1\\
			1 & 0 & 0 & 1
		\end{pmatrix}.\begin{pmatrix}
			1\\
			& xy_1q\\
			&& xy_2q\\
			&&& xy_3q
		\end{pmatrix}.
		\begin{pmatrix}
			\ocA(2,2,2)\\
			\ocA(3,2,2)\\
			\ocA(3,2,2)\\
			\ocA(3,3,2)
		\end{pmatrix}.
	\end{equation*}
	In other words, there are three relations to establish:
	\begin{align*}
		\ocA(1,1,1) &= \ocA(2,2,2) + xy_1q \ocA(3,2,2) + xy_2q \ocA(3,2,2) + xy_3q \ocA(3,3,2),\\
		\ocA(2,1,1) &= \ocA(2,2,2) + xy_2q \ocA(3,2,2) + xy_3q \ocA(3,3,2),\\
		\ocA(2,2,1) &= \ocA(2,2,2) + xy_3q \ocA(3,3,2).
	\end{align*}
	For the third relation, we apply \eqref{eq:S-rec} to $\ocA(2,2,1)$ at $r=3$; for the second relation, we apply \eqref{eq:S-rec} to $\ocA(2,1,1)$ at $r=2$ and then use the last relation; for the first relation, we apply \eqref{eq:S-rec} to $\ocA(1,1,1)$ at $r=1$ and then use the second relation. By doing so, the proof is finished.
\end{proof}

\section{Linked partition ideals: Russell's partitions}\label{sec:R}

For the moment, we move on to Russell's partitions. Once again from the gap conditions, it is plain that $\Lambda$ is identical to the span one linked partition ideal $\mI=\mI(\langle\Pi,\cL\rangle,2)$ with
\begin{align*}
	\Pi &= \{\pi_1,\pi_2,\pi_3,\pi_4,\pi_5,\pi_6,\pi_7,\pi_8,\pi_9,\pi_{10}\}\\
	& = \{\varnothing,1_\sfR,1_\sfG,1_\sfB,1_\sfR+2_\sfG,1_\sfG+2_\sfG,1_\sfB+2_\sfB,2_\sfR,2_\sfG,2_\sfB\},
\end{align*}
and
\begin{align*}
	\cL(\pi_i) = \begin{cases}
		\{\pi_1,\pi_2,\pi_3,\pi_4,\pi_5,\pi_6,\pi_7,\pi_8,\pi_9,\pi_{10}\}, & i=1,4,\\
		\{\pi_1,\pi_3,\pi_4,\pi_6,\pi_7,\pi_8,\pi_9,\pi_{10}\}, & i=2,3,\\
		\{\pi_1,\pi_3,\pi_6,\pi_9,\pi_{10}\}, & i=5,6,8,9,\\
		\{\pi_1,\pi_4,\pi_7,\pi_8,\pi_9,\pi_{10}\}, & i=7,10.
	\end{cases}
\end{align*}

Now define
\begin{align*}
	G_k(x)=G_k(x,y_1,y_2,y_3,q):=\sum_{\substack{\lambda\in\Omega\\\lambda_0=\pi_k}} x^{\len(\lambda)} y_1^{\len_\sfR(\lambda)} y_2^{\len_\sfG(\lambda)} y_3^{\len_\sfB(\lambda)} q^{|\lambda|}.
\end{align*}
Then
\begin{align*}
	G_k(x)=x^{\len(\pi_k)} y_1^{\len_\sfR(\pi_k)} y_2^{\len_\sfG(\pi_k)} y_3^{\len_\sfB(\pi_k)} q^{|\pi_k|} \sum_{j:\pi_j\in\cL(\pi_k)} G_j(xq^2),
\end{align*}
so that this time we have the $q$-difference system
\begin{equation}
	\begin{pmatrix}
		G_1(x)\\
		G_2(x)\\
		\vdots\\
		G_{10}(x)
	\end{pmatrix}
	=
	\bW.\bA.
	\begin{pmatrix}
		G_1(xq^2)\\
		G_2(xq^2)\\
		\vdots\\
		G_{10}(xq^2)
	\end{pmatrix},
\end{equation}
where
\begin{align*}
	\bW=\diag(1,xy_1q,xy_2q,xy_3q,x^2y_1y_2q^3,x^2y_2^2q^3,x^2y_3^2q^3,xy_1q^2,xy_2q^2,xy_3q^2),
\end{align*}
and
\begin{align*}
	\bA=\begin{pmatrix}
		1 & 1 & 1 & 1 & 1 & 1 & 1 & 1 & 1 & 1\\
		1 & 0 & 1 & 1 & 0 & 1 & 1 & 1 & 1 & 1\\
		1 & 0 & 1 & 1 & 0 & 1 & 1 & 1 & 1 & 1\\
		1 & 1 & 1 & 1 & 1 & 1 & 1 & 1 & 1 & 1\\
		1 & 0 & 1 & 0 & 0 & 1 & 0 & 0 & 1 & 1\\
		1 & 0 & 1 & 0 & 0 & 1 & 0 & 0 & 1 & 1\\
		1 & 0 & 0 & 1 & 0 & 0 & 1 & 1 & 1 & 1\\
		1 & 0 & 1 & 0 & 0 & 1 & 0 & 0 & 1 & 1\\
		1 & 0 & 1 & 0 & 0 & 1 & 0 & 0 & 1 & 1\\
		1 & 0 & 0 & 1 & 0 & 0 & 1 & 1 & 1 & 1
	\end{pmatrix}.
\end{align*}
Let us write
\begin{align*}
	\begin{pmatrix}
		F_1(x)\\
		F_2(x)\\
		\vdots\\
		F_{10}(x)
	\end{pmatrix}=\bA.\begin{pmatrix}
		G_1(x)\\
		G_2(x)\\
		\vdots\\
		G_{10}(x)
	\end{pmatrix}.
\end{align*}
Then
\begin{align}\label{eq:F}
	\begin{pmatrix}
		F_1(x)\\
		F_2(x)\\
		\vdots\\
		F_{10}(x)
	\end{pmatrix}=\bA.\bW.\begin{pmatrix}
		F_1(xq^2)\\
		F_2(xq^2)\\
		\vdots\\
		F_{10}(xq^2)
	\end{pmatrix}.
\end{align}

Finally, by abuse of notation, we define
\begin{align*}
	\GF_{\widehat{\Lambda}}(x) = \GF_{\widehat{\Lambda}}(x,y_1,y_2,y_3;q) := \sum_{\lambda\in \widehat{\Lambda}} x^{\len(\lambda)} y_1^{\len_\sfR(\lambda)} y_2^{\len_\sfG(\lambda)} y_3^{\len_\sfB(\lambda)} q^{|\lambda|}
\end{align*}
for $\widehat{\Lambda}$ a subset of $\Lambda$. Then it is true that
\begin{align*}
	\GF_{\Lambda}(x) &= \sum_{k\in\{1,2,3,4,5,6,7,8,9,10\}} G_k(x) = F_1(x)=F_4(x),\\
	\GF_{\Lambda_{\{1_\sfR\}}}(x) &= \sum_{k\in\{1,3,4,6,7,8,9,10\}} G_k(x) = F_2(x)=F_3(x),\\
	\GF_{\Lambda_{\{1_\sfR,2_\sfR,1_\sfB\}}}(x) &= \sum_{k\in\{1,3,6,9,10\}} G_k(x) = F_5(x)=F_6(x)=F_8(x)=F_9(x),\\
	\GF_{\Lambda_{\{1_\sfR,1_\sfG\}}}(x) &= \sum_{k\in\{1,4,7,8,9,10\}} G_k(x) = F_7(x)=F_{10}(x).
\end{align*}
These relations produce the boundary condition $F_1(0) = \cdots = F_{10}(0) = 1$ for the $q$-difference system \eqref{eq:F}.

\section{Multiple $q$-summations and their recurrences}\label{sec:q-sum}

As we have remarked, the triple sums in Theorem~\ref{th:main-gf} contain the factor $(-q;q)_{n_1+n_2+n_3}$ in the numerator, making the recurrences in \eqref{eq:S-rec} no longer practicable. In this section, we resolve this issue in a more general setting.

Using the same assumptions as those for \eqref{eq:S-beta-sum-ref}, we define
\begin{align}\label{eq:P-beta-sum-ref}
	\ocP(\ubb)&:=\sum_{n_1,\ldots,n_R\ge 0}q^{\sum_{i=1}^R \alpha_{i,i}\binom{n_i}{2}+\sum_{1\le i< j\le R}\alpha_{i,j}n_i n_j+ \sum_{i=1}^R \beta_i n_i}\notag\\
	&\;\quad\times \frac{x_1^{\sum_{i=1}^R \gamma_{i}^{(1)} n_i}\cdots x_J^{\sum_{i=1}^R \gamma_{i}^{(J)} n_i}(-q;q)_{n_1+\cdots+n_R}}{(q^{A_1};q^{A_1})_{n_1}\cdots (q^{A_R};q^{A_R})_{n_R}}.
\end{align}
Besides the auxiliary vectors in \eqref{eq:vec-1}, we further introduce
\begin{align}
	\ubone := (1,\ldots,1), \qquad\qquad \ubtwo_{\sigma,\chi_{<r}} := (2\chi_{\sigma^{-1}(1)<r},\ldots,2\chi_{\sigma^{-1}(R)<r}),
\end{align}
where the indicator function $\chi_{<r}$ is given by
\begin{align*}
	\chi_{i< r} := \begin{cases}
		1, & \text{if $i<r$},\\
		0, & \text{if $i\ge r$},
	\end{cases}
\end{align*}
and $\sigma$ is a permutation of $\{1,\ldots,R\}$.

We first establish an analog to \eqref{eq:S-rec}.

\begin{lemma}\label{le:rec-for-P}
	For $1\le r\le R$, we have
	\begin{align}\label{eq:rec-for-P}
		\ocP(\ubb)&=\ocP(\ubb+\ubA_{\delta_r})\notag\\
		&\quad+x_1^{\gamma_{r}^{(1)}}\cdots x_J^{\gamma_{r}^{(J)}}q^{\beta_r}\ocP(\ubb+\uba_r)\notag\\
		&\quad+x_1^{\gamma_{r}^{(1)}}\cdots x_J^{\gamma_{r}^{(J)}}q^{\beta_r+1}\ocP(\ubb+\uba_r+\ubone).
	\end{align}
\end{lemma}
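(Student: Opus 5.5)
We compare coefficients termwise, as in the proof of \eqref{eq:S-rec}. Write the summand of $\ocP(\ubb)$ as $T(\mathbf{n})\,(-q;q)_{|\mathbf{n}|}$, where $|\mathbf{n}|=n_1+\cdots+n_R$ and $T(\mathbf{n})$ is the corresponding summand of $\ocS(\ubb)$ in \eqref{eq:S-beta-sum-ref}.

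First consider $\ocP(\ubb)-\ocP(\ubb+\ubA_{\delta_r})$. Replacing $\beta_r$ by $\beta_r+A_r$ multiplies the summand by $q^{A_rn_r}$. Hence the difference has summand $T(\mathbf{n})(1-q^{A_rn_r})(-q;q)_{|\mathbf{n}|}$. The factor $1-q^{A_rn_r}$ cancels the top factor of $(q^{A_r};q^{A_r})_{n_r}$, so only terms with $n_r\ge1$ survive.

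Next shift $n_r\mapsto n_r+1$. The quadratic exponent changes by
\[
\alpha_{r,r}\Big[\tbinom{n_r+1}{2}-\tbinom{n_r}{2}\Big]+\sum_{j\ne r}\alpha_{r,j}n_j=\sum_{j}\alpha_{r,j}n_j ,
\]
which uses the symmetry of $\uba$. The linear part produces the prefactor $q^{\beta_r}$, and the $x$-powers produce $x_1^{\gamma_r^{(1)}}\cdots x_J^{\gamma_r^{(J)}}$. This is exactly the bookkeeping behind \eqref{eq:S-rec}: the shifted sum is $x_1^{\gamma_r^{(1)}}\cdots x_J^{\gamma_r^{(J)}}q^{\beta_r}$ times the summand of parameter $\ubb+\uba_r$.

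The only new feature is that the Pochhammer factor becomes $(-q;q)_{|\mathbf{n}|+1}$ rather than $(-q;q)_{|\mathbf{n}|}$. We split it as
\[
(-q;q)_{|\mathbf{n}|+1}=(-q;q)_{|\mathbf{n}|}\,(1+q^{|\mathbf{n}|+1}).
\]
The ``$1$'' term gives precisely $x_1^{\gamma_r^{(1)}}\cdots x_J^{\gamma_r^{(J)}}q^{\beta_r}\ocP(\ubb+\uba_r)$. In the other term we write $q^{|\mathbf{n}|+1}=q\cdot q^{\sum_i n_i}$. This is an extra factor $q$ together with a shift of every $\beta_i$ by $1$, so it yields $x_1^{\gamma_r^{(1)}}\cdots x_J^{\gamma_r^{(J)}}q^{\beta_r+1}\ocP(\ubb+\uba_r+\ubone)$. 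Adding the three contributions gives \eqref{eq:rec-for-P}.

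There is no real obstacle. The only care needed is in the index shift: the Pochhammer length increases by one, and the exponent change must be checked via the symmetry of $\uba$. All manipulations are valid as formal power series, since each step is a coefficientwise identity.
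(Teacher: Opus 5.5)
Your proof is correct and is essentially the paper's argument: you form $\ocP(\ubb)-\ocP(\ubb+\ubA_{\delta_r})$, cancel $1-q^{A_rn_r}$ against the top factor of $(q^{A_r};q^{A_r})_{n_r}$, shift $n_r\mapsto n_r+1$ using the symmetry of $\uba$, and split the Pochhammer factor $(1+q^{|\mathbf{n}|+1})$ into the two $\ocP$ terms. The only difference is cosmetic: the paper factors $(-q;q)_{|\mathbf{n}|}=(1+q^{|\mathbf{n}|})(-q;q)_{|\mathbf{n}|-1}$ before the index shift, whereas you do it after.
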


\begin{proof}
	Recall that $\uba$ is symmetric so that $\alpha_{i,j}=\alpha_{j,i}$ for $1\le i,j\le R$. We have
	\begin{align*}
		&\ocP(\ubb) - \ocP(\ubb+\ubA_{\delta_r})\\
		&\qquad=\sum_{n_1,\ldots,n_R\ge 0}\frac{q^{\sum_i \alpha_{i,i}\binom{n_i}{2}+\sum_{i<j}\alpha_{i,j}n_i n_j}(1-q^{n_rA_r})(-q;q)_{n_1+\cdots+n_R}}{(q^{A_1};q^{A_1})_{n_1}\cdots(q^{A_r};q^{A_r})_{n_r}\cdots (q^{A_R};q^{A_R})_{n_R}}\\
		&\qquad\quad\times x_1^{\sum_i \gamma_{1,i} n_i}\cdots x_J^{\sum_i \gamma_{J,i} n_i} q^{\sum_i \beta_i n_i}\\
		&\qquad=\sum_{n_1,\ldots,n_R\ge 0}\frac{q^{\sum_i \alpha_{i,i}\binom{n_i}{2}+\sum_{i<j}\alpha_{i,j}n_i n_j}(1+q^{n_1+\cdots+n_R})(-q;q)_{n_1+\cdots+n_R-1}}{(q^{A_1};q^{A_1})_{n_1}\cdots(q^{A_r};q^{A_r})_{n_r-1}\cdots (q^{A_R};q^{A_R})_{n_R}}\\
		&\qquad\quad\times x_1^{\sum_i \gamma_{1,i} n_i}\cdots x_J^{\sum_i \gamma_{J,i} n_i} q^{\sum_i \beta_i n_i}\\
		&\qquad=x_1^{\gamma_{1,r}}\cdots x_J^{\gamma_{J,r}}q^{\beta_r}\sum_{n_1,\ldots,n_R\ge 0}\frac{q^{\sum_i \alpha_{i,i}\binom{n_i}{2}+\sum_{i<j}\alpha_{i,j}n_i n_j}(-q;q)_{n_1+\cdots+n_R}}{(q^{A_1};q^{A_1})_{n_1}\cdots(q^{A_r};q^{A_r})_{n_r}\cdots (q^{A_R};q^{A_R})_{n_R}}\\
		&\qquad\quad\times x_1^{\sum_i \gamma_{1,i} n_i}\cdots x_J^{\sum_i \gamma_{J,i} n_i} \big(q^{\sum_i (\beta_i+\alpha_{r,i}) n_i}+q^{1 + \sum_i (\beta_i+\alpha_{r,i}+1) n_i}\big),
	\end{align*}
	thereby yielding the desired identity.
\end{proof}

Next, we specialize $\ocP(\ubb)$ in \eqref{eq:P-beta-sum-ref} with $A_1=\cdots=A_R=2$. That is, we introduce
\begin{align}\label{eq:Q-beta-sum-ref}
	\ocQ(\ubb)&:=\sum_{n_1,\ldots,n_R\ge 0}q^{\sum_{i=1}^R \alpha_{i,i}\binom{n_i}{2}+\sum_{1\le i< j\le R}\alpha_{i,j}n_i n_j+ \sum_{i=1}^R \beta_i n_i}\notag\\
	&\;\quad\times \frac{x_1^{\sum_{i=1}^R \gamma_{1,i} n_i}\cdots x_J^{\sum_{i=1}^R \gamma_{J,i} n_i}(-q;q)_{n_1+\cdots+n_R}}{(q^{2};q^{2})_{n_1}\cdots (q^{2};q^{2})_{n_R}}.
\end{align}

We prove the following identities that can be viewed as a generalization of Russell's atomic relations in \cite[Section~2.1]{Rus2026}.

\begin{lemma}\label{le:rec-for-Q}
	For $\sigma$ a permutation of $\{1,\ldots,R\}$, we have
	\begin{align}\label{eq:rec-for-Q}
		\ocQ(\ubb)&=\ocQ(\ubb+\ubone)\notag\\
		&\quad+x_1^{\gamma_{\sigma(1)}^{(1)}}\cdots x_J^{\gamma_{\sigma(1)}^{(J)}}q^{\beta_{\sigma(1)}}\ocQ(\ubb+\uba_{\sigma(1)}+\ubtwo_{\sigma,\chi_{<1}})\notag\\
		&\quad+x_1^{\gamma_{\sigma(2)}^{(1)}}\cdots x_J^{\gamma_{\sigma(2)}^{(J)}}q^{\beta_{\sigma(2)}}\ocQ(\ubb+\uba_{\sigma(2)}+\ubtwo_{\sigma,\chi_{<2}})\notag\\
		&\quad+\cdots\notag\\
		&\quad+x_1^{\gamma_{\sigma(R)}^{(1)}}\cdots x_J^{\gamma_{\sigma(R)}^{(J)}}q^{\beta_{\sigma(R)}}\ocQ(\ubb+\uba_{\sigma(R)}+\ubtwo_{\sigma,\chi_{<R}}).
	\end{align}
\end{lemma}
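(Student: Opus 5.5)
The plan is to derive \eqref{eq:rec-for-Q} by chaining Lemma~\ref{le:rec-for-P} (with $A_1=\cdots=A_R=2$, so $\ubA_{\delta_r}=2\delta_r$) along the order prescribed by $\sigma$. This yields a relation that shifts $\ubb$ by $2\,\ubone$. A short ``parity'' argument then recovers the shift by $\ubone$.

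\emph{Step 1: chaining.} Put $\ubb^{(k)}:=\ubb+\ubtwo_{\sigma,\chi_{<k}}$ for $1\le k\le R+1$. Thus $\ubb^{(1)}=\ubb$, $\ubb^{(R+1)}=\ubb+2\,\ubone$, and $\ubb^{(k+1)}=\ubb^{(k)}+2\delta_{\sigma(k)}$. Since $\sigma(k)$ is not among $\sigma(1),\ldots,\sigma(k-1)$, we have $\beta^{(k)}_{\sigma(k)}=\beta_{\sigma(k)}$. Write $w_k:=x_1^{\gamma_{\sigma(k)}^{(1)}}\cdots x_J^{\gamma_{\sigma(k)}^{(J)}}$. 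Applying \eqref{eq:rec-for-P} to $\ocQ(\ubb^{(k)})$ at $r=\sigma(k)$ gives
\begin{align*}
\ocQ(\ubb^{(k)})=\ocQ(\ubb^{(k+1)})+w_kq^{\beta_{\sigma(k)}}\ocQ(\ubb+\uba_{\sigma(k)}+\ubtwo_{\sigma,\chi_{<k}})+w_kq^{\beta_{\sigma(k)}+1}\ocQ(\ubb+\uba_{\sigma(k)}+\ubtwo_{\sigma,\chi_{<k}}+\ubone).
\end{align*}
Denote the $k$-th summand on the right of \eqref{eq:rec-for-Q} by $T_k(\ubb):=w_kq^{\beta_{\sigma(k)}}\ocQ(\ubb+\uba_{\sigma(k)}+\ubtwo_{\sigma,\chi_{<k}})$. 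The last term above is exactly $T_k(\ubb+\ubone)$. Telescoping over $k=1,\ldots,R$ therefore gives
\begin{align*}
\ocQ(\ubb)=\ocQ(\ubb+2\,\ubone)+\sum_{k=1}^R T_k(\ubb)+\sum_{k=1}^R T_k(\ubb+\ubone).
\end{align*}

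\emph{Step 2: parity trick.} Define the defect $E(\ubb):=\ocQ(\ubb)-\ocQ(\ubb+\ubone)-\sum_k T_k(\ubb)$; the lemma is the claim $E\equiv 0$. Rearranging Step~1 yields $E(\ubb)=-E(\ubb+\ubone)$. Iterating, we get $E(\ubb)=(-1)^mE(\ubb+m\,\ubone)$ for every $m\ge0$.

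\emph{Step 3: limit.} Let $m\to\infty$. Every summand of $\ocQ(\ubb+m\,\ubone)$ with $(n_1,\ldots,n_R)\neq\mathbf 0$ carries an extra factor $q^{m(n_1+\cdots+n_R)}$. Hence, coefficientwise in the monomials in $x_1,\ldots,x_J$, we have $\ocQ(\ubb+m\,\ubone)\to1$ in the $q$-adic topology, and likewise for $\ocQ(\ubb+(m+1)\ubone)$. Each $T_k(\ubb+m\,\ubone)$ carries the factor $q^{\beta_{\sigma(k)}+m}$ and so tends to $0$. Thus $E(\ubb+m\,\ubone)\to 1-1-0=0$, and since $E(\ubb)$ does not depend on $m$, we conclude $E(\ubb)=0$.

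The main obstacle is making Step~3 rigorous in the formal setting. The exponents $\alpha_{i,i}$ and $\beta_i$ may be negative and the $x$-exponents may vanish, so one must check that, for a fixed $x$-monomial and fixed $q$-degree, only finitely many terms contribute and that they all disappear once $m$ is large. I would handle this by arguing coefficientwise, as in the formal treatment of $\ocS$ in \cite{Che2022a}, using that the growth $q^{m\sum n_i}$ eventually dominates any fixed lower bound on the remaining exponents. If that fails in full generality, I would instead impose the positivity assumptions under which $\ocQ$ is a well-defined formal series, which is the only setting in which the lemma is used later.
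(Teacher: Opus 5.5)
Your proof is correct, but it takes a genuinely different route from the paper.

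\textbf{The paper's route.} The paper argues termwise. It writes $\ocQ(\ubb)-\ocQ(\ubb+\ubone)$ as one sum carrying the factor $(1-q^{N})(-q;q)_N=(1-q^{2N})(-q;q)_{N-1}$, where $N=n_1+\cdots+n_R$. It then telescopes
\[
1-q^{2N}=\sum_{k=1}^{R}q^{2(n_{\sigma(1)}+\cdots+n_{\sigma(k-1)})}\bigl(1-q^{2n_{\sigma(k)}}\bigr).
\]
Each piece cancels against $(q^2;q^2)_{n_{\sigma(k)}}$, and the shift $n_{\sigma(k)}\mapsto n_{\sigma(k)}+1$ restores $(-q;q)_N$ and produces the $k$-th term directly. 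The prefactor $q^{2(n_{\sigma(1)}+\cdots+n_{\sigma(k-1)})}$ is exactly the shift $\ubtwo_{\sigma,\chi_{<k}}$.

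\textbf{Your route.} You never return to the summands. Chaining Lemma~\ref{le:rec-for-P} along $\sigma$ gives $E(\ubb)+E(\ubb+\ubone)=0$. I checked that the three terms at step $k$ are exactly $\ocQ(\ubb^{(k+1)})$, $T_k(\ubb)$ and $T_k(\ubb+\ubone)$, using $\beta^{(k)}_{\sigma(k)}=\beta_{\sigma(k)}$. Anti-periodicity together with $E(\ubb+m\,\ubone)\to 0$ then forces $E\equiv 0$.

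\textbf{What each buys.} The paper's argument is shorter and purely algebraic, with no limiting process. It needs only that the series in the statement are defined. Yours shows that Lemma~\ref{le:rec-for-Q} is a formal consequence of Lemma~\ref{le:rec-for-P} plus the boundary behaviour $\ocQ(\ubb+m\,\ubone)\to 1$. This is close in spirit to the uniqueness argument for the system \eqref{eq:F}. The cost is that you need a topology in which that limit holds.

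For the application in Section~\ref{sec:GF}, your Step~3 is rigorous as written. Since $\boldsymbol{\underline{\gamma}^{(1)}}=(1,1,1)$, each monomial $x^{N}y_1^{n_1}y_2^{n_2}y_3^{n_3}$ has a single Laurent series in $q$ as coefficient, and that coefficient picks up exactly $q^{mN}$. In full generality you do need the finiteness/positivity hypothesis you mention, which the paper's termwise argument avoids.
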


\begin{proof}
	We have
	\begin{align*}
		&\ocQ(\ubb) - \ocQ(\ubb+\ubone)\\
		&\qquad=\sum_{n_1,\ldots,n_R\ge 0}\frac{q^{\sum_i \alpha_{i,i}\binom{n_i}{2}+\sum_{i<j}\alpha_{i,j}n_i n_j}(1-q^{n_1+\cdots+n_R})(-q;q)_{n_1+\cdots+n_R}}{(q^{2};q^{2})_{n_1}\cdots (q^{2};q^{2})_{n_R}}\\
		&\qquad\quad\times x_1^{\sum_i \gamma_{1,i} n_i}\cdots x_J^{\sum_i \gamma_{J,i} n_i} q^{\sum_i \beta_i n_i}\\
		&\qquad=\sum_{n_1,\ldots,n_R\ge 0}\frac{q^{\sum_i \alpha_{i,i}\binom{n_i}{2}+\sum_{i<j}\alpha_{i,j}n_i n_j}(1-q^{2(n_1+\cdots+n_R)})(-q;q)_{n_1+\cdots+n_R-1}}{(q^{2};q^{2})_{n_1}\cdots (q^{2};q^{2})_{n_R}}\\
		&\qquad\quad\times x_1^{\sum_i \gamma_{1,i} n_i}\cdots x_J^{\sum_i \gamma_{J,i} n_i} q^{\sum_i \beta_i n_i}.
	\end{align*}
	Now write
	\begin{align*}
		1-q^{2(n_1+\cdots+n_R)} &= (1-q^{2\sigma(1)}) + q^{2\sigma(1)}(1-q^{2\sigma(2)}) + q^{2(\sigma(1)+\sigma(2))}(1-q^{2\sigma(3)})\\
		&\quad + q^{2(\sigma(1)+\sigma(2)+\cdots+\sigma(R-1))}(1-q^{2\sigma(R)}).
	\end{align*}
	The claimed relation then follows.
\end{proof}

\section{Generating functions}\label{sec:GF}

Now we are in a position to finish the proof of Theorem~\ref{th:main-gf}. Throughout, for the multiple $q$-summation $\ocQ(\ubb)$ in \eqref{eq:Q-beta-sum-ref}, we choose
\begin{align*}
	\uba=\begin{pmatrix}
		3 & 1 & 1\\1 & 1 & 1\\1 & 1 & 1
	\end{pmatrix},
\end{align*}
and
\begin{alignat*}{2}
	x_1&\mapsto x,\qquad\qquad &&\boldsymbol{\underline{\gamma}^{(1)}}=(1,1,1),\\
	x_2&\mapsto y_1,\qquad\qquad &&\boldsymbol{\underline{\gamma}^{(2)}}=(1,0,0),\\
	x_3&\mapsto y_2,\qquad\qquad &&\boldsymbol{\underline{\gamma}^{(3)}}=(0,1,0),\\
	x_4&\mapsto y_3,\qquad\qquad &&\boldsymbol{\underline{\gamma}^{(4)}}=(0,0,1).
\end{alignat*}
In other words, we focus on the triple sum
\begin{align}\label{eq:R-def}
	\ocR(\beta_1,\beta_2,\beta_3)&:=\sum_{n_1,n_2,n_3\ge 0} q^{3\binom{n_1}{2}+\binom{n_2}{2}+\binom{n_3}{2}+n_1n_2+n_2n_3+n_3n_1}\notag\\
	&\;\quad\times \frac{x^{n_1+n_2+n_3}y_1^{n_1}y_2^{n_2}y_3^{n_3}q^{\beta_1n_1+\beta_2n_2+\beta_3n_3}(-q;q)_{n_1+n_2+n_3}}{(q^2;q^2)_{n_1}(q^2;q^2)_{n_2}(q^2;q^2)_{n_3}}.
\end{align}

Note that after the dilation $x\mapsto xq^2$, the sum $\ocA(\beta_1,\beta_2,\beta_3)$ becomes $\ocA(\beta_1+2,\beta_2+2,\beta_3+2)$. According to the arguments in Section~\ref{sec:R}, Theorem~\ref{th:main-gf} follows as long as the following four relations are established:
\begin{align*}
	F_1(x) = F_4(x) &= \ocR(1,1,2) + xy_3q \ocR(3,3,2),\\
	F_2(x) = F_3(x) &= \ocR(2,2,1) + xy_2q \ocR(4,2,3),\\
	F_5(x) = F_6(x) = F_8(x) = F_9(x) &= \ocR(3,1,2),\\
	F_7(x) = F_{10}(x) &= \ocR(2,2,1).
\end{align*}
In light of \eqref{eq:F}, we are only left to prove the following lemmas.

\begin{lemma}
	We have
	\begin{align}\label{eq:312}
		\ocR(3,1,2) &= \ocR(3,3,4) + xy_3q^3 \ocR(5,5,4)\notag\\
		&\quad + xy_2q\ocR(4,4,3) + x^2y_2^2q^4 \ocR(6,4,5)\notag\\
		&\quad + x^2y_2^2q^3 \ocR(5,3,4) + xy_2q^2 \ocR(5,3,4)\notag\\
		&\quad + xy_3q^2 \ocR(4,4,3).
	\end{align}
\end{lemma}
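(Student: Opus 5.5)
The identity \eqref{eq:312} should follow from three applications of Lemma~\ref{le:rec-for-P}, specialized to $A_1=A_2=A_3=2$ and to the data chosen for $\ocR$. With this data $\uba_1=(3,1,1)$ and $\uba_2=\uba_3=(1,1,1)$. The factor in front of the shifted sums is $xy_rq^{\beta_r}$. So for $r=2$ or $3$ the recurrence \eqref{eq:rec-for-P} reads
\begin{align*}
\ocR(\ubb)=\ocR(\ubb+2\boldsymbol{e}_r)+xy_rq^{\beta_r}\ocR(\ubb+\ubone)+xy_rq^{\beta_r+1}\ocR(\ubb+2\cdot\ubone),
\end{align*}
where $\boldsymbol{e}_r$ is the $r$-th unit vector. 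Lemma~\ref{le:rec-for-Q} should not be needed for this particular relation.

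\emph{Step 1.} Apply this recurrence to $\ocR(3,1,2)$ with $r=2$:
\begin{align*}
\ocR(3,1,2)=\ocR(3,3,2)+xy_2q\,\ocR(4,2,3)+xy_2q^2\,\ocR(5,3,4).
\end{align*}
The last term is already one of the seven terms on the right of \eqref{eq:312}.

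\emph{Step 2.} Expand $\ocR(3,3,2)$ with $r=3$:
\begin{align*}
\ocR(3,3,2)=\ocR(3,3,4)+xy_3q^2\,\ocR(4,4,3)+xy_3q^3\,\ocR(5,5,4).
\end{align*}
This produces exactly the three terms of \eqref{eq:312} that carry $y_3$, together with $\ocR(3,3,4)$.

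\emph{Step 3.} Expand $\ocR(4,2,3)$ with $r=2$:
\begin{align*}
\ocR(4,2,3)=\ocR(4,4,3)+xy_2q^2\,\ocR(5,3,4)+xy_2q^3\,\ocR(6,4,5).
\end{align*}
Multiplying by $xy_2q$ gives the remaining terms $xy_2q\,\ocR(4,4,3)$, $x^2y_2^2q^3\,\ocR(5,3,4)$ and $x^2y_2^2q^4\,\ocR(6,4,5)$. Substituting Steps~2 and~3 into Step~1 then gives \eqref{eq:312}.

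There is no real obstacle. The only care needed is in choosing which index $r$ to recur on at each stage, so that every resulting argument vector matches a term of \eqref{eq:312}. The choices were guided by the targets: $\ocR(3,3,4)=\ocR(3,1,2+2)$ with the second entry also raised by $2$ suggests shifting $\beta_2$ and then $\beta_3$, and the $y_2^2$ terms suggest a second application at $r=2$. It remains to check the powers of $q$ in each step, which is immediate since $\beta_r$ is read off directly.
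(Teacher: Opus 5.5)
Your proposal is correct and follows the paper's proof step for step: apply \eqref{eq:rec-for-P} at $r=2$ to $\ocR(3,1,2)$, then at $r=3$ to $\ocR(3,3,2)$ and at $r=2$ to $\ocR(4,2,3)$, and substitute back. Your shift vectors and powers of $q$ agree with the paper's.
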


\begin{proof}
	This is
	\begin{align*}
		F_5(x) &= F_1(xq^2) + xy_2qF_3(xq^2) + x^2y_2^2q^3F_6(xq^2)\\
		&\quad + xy_2q^2F_9(xq^2) + xy_3q^2F_{10}(xq^2).
	\end{align*}
	We first apply \eqref{eq:rec-for-P} at $r=2$ to get
	\begin{align*}
		\ocR(3,1,2) = \ocR(3,3,2) + xy_2q \ocR(4,2,3) + xy_2q^2 \ocR(5,3,4).
	\end{align*}
	For the terms $\ocR(3,3,2)$ and $\ocR(4,2,3)$ on the right-hand side of the above, we use \eqref{eq:rec-for-P} at $r=3$ and $2$, respectively. Then
	\begin{align*}
		\ocR(3,3,2) &= \ocR(3,3,4) + xy_3q^2 \ocR(4,4,3) + xy_3q^3 \ocR(5,5,4),\\
		\ocR(4,2,3) &= \ocR(4,4,3) + xy_2q^2 \ocR(5,3,4) + xy_2q^3 \ocR(6,4,5).
	\end{align*}
	Substituting them into the identity for $\ocR(3,1,2)$, we arrive at \eqref{eq:312}.
\end{proof}

\begin{lemma}
	We have
	\begin{align}\label{eq:112+332}
		\ocR(1,1,2) + xy_3q \ocR(3,3,2) &= \ocR(3,3,4) + xy_3q^3 \ocR(5,5,4)\notag\\
		&\quad + xy_1q\ocR(4,4,3) + x^2y_1y_2q^4 \ocR(6,4,5)\notag\\
		&\quad + xy_2q\ocR(4,4,3) + x^2y_2^2q^4 \ocR(6,4,5)\notag\\
		&\quad + xy_3q\ocR(3,3,4) + x^2y_3^2q^4 \ocR(5,5,4)\notag\\
		&\quad + x^2y_1y_2q^3\ocR(5,3,4) + x^2y_2^2q^3 \ocR(5,3,4)\notag\\
		&\quad + x^2y_3^2q^3 \ocR(4,4,3) + xy_1q^2 \ocR(5,3,4)\notag\\
		&\quad + xy_2q^2 \ocR(5,3,4) + xy_3q^2 \ocR(4,4,3).
	\end{align}
\end{lemma}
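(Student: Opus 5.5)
The identity \eqref{eq:112+332} is the first row of \eqref{eq:F}, namely $F_1(x)=\sum_{k=1}^{10}\bW_{k,k}F_k(xq^2)$, written in terms of $\ocR$. I would derive it by expanding the two terms on the left-hand side with Lemma~\ref{le:rec-for-P}, and then using the previous lemma \eqref{eq:312} for the piece already computed there. Throughout, the only data needed are the rows $\uba_1=(3,1,1)$ and $\uba_2=\uba_3=(1,1,1)$ of $\uba$, together with $\ubA=(2,2,2)$.

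\emph{Step 1: expand $\ocR(1,1,2)$.} I apply \eqref{eq:rec-for-P} at $r=1$:
\begin{align*}
\ocR(1,1,2)=\ocR(3,1,2)+xy_1q\,\ocR(4,2,3)+xy_1q^2\,\ocR(5,3,4).
\end{align*}
I then replace $\ocR(3,1,2)$ by the seven-term right-hand side of \eqref{eq:312}. Next I expand $\ocR(4,2,3)$ with \eqref{eq:rec-for-P} at $r=2$:
\begin{align*}
\ocR(4,2,3)=\ocR(4,4,3)+xy_2q^2\,\ocR(5,3,4)+xy_2q^3\,\ocR(6,4,5).
\end{align*}
Multiplying this by $xy_1q$ gives the terms $xy_1q\,\ocR(4,4,3)$, $x^2y_1y_2q^3\,\ocR(5,3,4)$ and $x^2y_1y_2q^4\,\ocR(6,4,5)$ that appear in \eqref{eq:112+332}.

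\emph{Step 2: expand $xy_3q\,\ocR(3,3,2)$.} I apply \eqref{eq:rec-for-P} at $r=3$, exactly as in the proof of \eqref{eq:312}:
\begin{align*}
\ocR(3,3,2)=\ocR(3,3,4)+xy_3q^2\,\ocR(4,4,3)+xy_3q^3\,\ocR(5,5,4).
\end{align*}
Multiplying by $xy_3q$ produces $xy_3q\,\ocR(3,3,4)$, $x^2y_3^2q^3\,\ocR(4,4,3)$ and $x^2y_3^2q^4\,\ocR(5,5,4)$.

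\emph{Step 3: collect terms.} The fourteen terms on the right of \eqref{eq:112+332} split into three groups, and I will check each against its source:
\begin{itemize}
\item the seven terms coming from \eqref{eq:312};
\item the three terms from Step~1 carrying a factor $y_1$, together with $xy_1q^2\,\ocR(5,3,4)$ from the first expansion;
\item the three terms from Step~2.
\end{itemize}
There is no real obstacle, since each step is a single use of an already established recurrence. The only care needed is bookkeeping: choosing the index $r$ so that the shifted arguments land on exactly the vectors $(3,3,4)$, $(4,4,3)$, $(5,3,4)$, $(5,5,4)$ and $(6,4,5)$, which correspond to $F_1,F_3,F_6,F_9,F_{10}$ evaluated at $xq^2$. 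With that matching in place, the lemma follows.
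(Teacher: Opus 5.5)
Your proposal is correct and is exactly the paper's argument: expand $\ocR(1,1,2)$ by \eqref{eq:rec-for-P} at $r=1$, substitute \eqref{eq:312} for $\ocR(3,1,2)$ and the $r=2$ expansion for $\ocR(4,2,3)$, and expand $\ocR(3,3,2)$ at $r=3$. The resulting $7+4+3$ terms then match the fourteen on the right-hand side. Your closing remark matching the five argument vectors to $F_1,F_3,F_6,F_9,F_{10}$ is loose, since for instance $F_1(xq^2)=\ocR(3,3,4)+xy_3q^3\ocR(5,5,4)$ accounts for two of them and $F_6,F_9$ both give $\ocR(5,3,4)$, but it plays no role in the proof.
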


\begin{proof}
	This is
	\begin{align*}
		F_1(x) &= F_1(xq^2) + xy_1qF_2(xq^2) + xy_2qF_3(xq^2) + xy_3qF_4(xq^2)\\
		&\quad + x^2y_1y_2q^3F_5(xq^2) + x^2y_2^2q^3F_6(xq^2) + x^2y_3^2q^3F_7(xq^2)\\
		&\quad + xy_1q^2F_8(xq^2) + xy_2q^2F_9(xq^2) + xy_3q^2F_{10}(xq^2).
	\end{align*}
	We first derive by using \eqref{eq:rec-for-P} at $r=1$ that
	\begin{align*}
		\ocR(1,1,2) = \ocR(3,1,2) + xy_1q \ocR(4,2,3) + xy_1q^2 \ocR(5,3,4).
	\end{align*}
	For the term $\ocR(3,1,2)$, we invoke \eqref{eq:312}; for the term $\ocR(4,2,3)$, we need \eqref{eq:rec-for-P} at $r=2$,
	\begin{align*}
		\ocR(4,2,3) = \ocR(4,4,3) + xy_2q^2 \ocR(5,3,4) + xy_2q^3 \ocR(6,4,5).
	\end{align*}
	Next, we apply \eqref{eq:rec-for-P} at $r=3$,
	\begin{align*}
		\ocR(3,3,2) = \ocR(3,3,4) + xy_3q^2 \ocR(4,4,3) + xy_3q^3 \ocR(5,5,4).
	\end{align*}
	Therefore, \eqref{eq:112+332} is true.
\end{proof}

\begin{lemma}
	We have
	\begin{align}\label{eq:221}
		\ocR(2,2,1) &= \ocR(3,3,4) + xy_3q^3 \ocR(5,5,4)\notag\\
		&\quad + xy_3q\ocR(3,3,4) + x^2y_3^2q^4 \ocR(5,5,4)\notag\\
		&\quad + x^2y_3^2q^3 \ocR(4,4,3) + xy_1q^2 \ocR(5,3,4)\notag\\
		&\quad + xy_2q^2 \ocR(5,3,4) + xy_3q^2 \ocR(4,4,3).
	\end{align}
\end{lemma}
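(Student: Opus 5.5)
The plan is to derive \eqref{eq:221} from the two recurrences of Section~\ref{sec:q-sum}, in the same way as \eqref{eq:312} and \eqref{eq:112+332}. Since the seventh row of $\bA$ has ones exactly in columns $1,4,7,8,9,10$, the relation to be proved is
\begin{align*}
	F_7(x) &= F_1(xq^2) + xy_3qF_4(xq^2) + x^2y_3^2q^3F_7(xq^2)\\
	&\quad + xy_1q^2F_8(xq^2) + xy_2q^2F_9(xq^2) + xy_3q^2F_{10}(xq^2).
\end{align*}
Throughout, $\uba$ has rows $\uba_1=(3,1,1)$, $\uba_2=(1,1,1)$, $\uba_3=(1,1,1)$, and $A_1=A_2=A_3=2$.

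The first step applies \eqref{eq:rec-for-P} to $\ocR(2,2,1)$ at $r=3$, which gives
\begin{align*}
	\ocR(2,2,1) = \ocR(2,2,3) + xy_3q\,\ocR(3,3,2) + xy_3q^2\,\ocR(4,4,3).
\end{align*}
The last term is already one of the required terms. For the middle term, I apply \eqref{eq:rec-for-P} at $r=3$ once more, exactly as in the proof of \eqref{eq:312}:
\begin{align*}
	\ocR(3,3,2) = \ocR(3,3,4) + xy_3q^2\,\ocR(4,4,3) + xy_3q^3\,\ocR(5,5,4).
\end{align*}
After multiplying by $xy_3q$, this yields the three terms $xy_3q\,\ocR(3,3,4)$, $x^2y_3^2q^3\,\ocR(4,4,3)$ and $x^2y_3^2q^4\,\ocR(5,5,4)$, all of which occur on the right-hand side of \eqref{eq:221}.

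What remains is to show
\begin{align*}
	\ocR(2,2,3) = \ocR(3,3,4) + xy_1q^2\,\ocR(5,3,4) + xy_2q^2\,\ocR(5,3,4) + xy_3q^3\,\ocR(5,5,4).
\end{align*}
The shift by $\ubone$ on the right suggests using Lemma~\ref{le:rec-for-Q} with the identity permutation $\sigma$. The three correction vectors are then as follows.
\begin{itemize}
	\item The first index gets $\uba_1=(3,1,1)$ with no extra $2$'s, which sends $(2,2,3)$ to $(5,3,4)$ with prefactor $xy_1q^{2}$.
	\item The second index gets $\uba_2+(2,0,0)$, which also gives $(5,3,4)$, with prefactor $xy_2q^2$.
	\item The third index gets $\uba_3+(2,2,0)$, which gives $(5,5,4)$, with prefactor $xy_3q^3$.
\end{itemize}
Collecting all the pieces should then reproduce \eqref{eq:221} term by term.

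The main obstacle is this last step. The proof of Lemma~\ref{le:rec-for-Q} as written telescopes $1-q^{2(n_1+\cdots+n_R)}$ with $\sigma(i)$ where $n_{\sigma(i)}$ is meant. So I would redo that telescoping explicitly for $R=3$, writing
\begin{align*}
	1-q^{2(n_1+n_2+n_3)} = (1-q^{2n_1}) + q^{2n_1}(1-q^{2n_2}) + q^{2n_1+2n_2}(1-q^{2n_3}).
\end{align*}
For each piece, I would cancel the factor $1-q^{2n_r}$ against $(q^2;q^2)_{n_r}$ and shift $n_r\mapsto n_r+1$. This confirms that the extra $2$'s land exactly on the indices preceding $r$, and it fixes the $q^{\beta_r}$ prefactors. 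It also uses that $(-q;q)_{N-1}(1-q^{2N})$ combines with the dropped factor to recover $(-q;q)_{N}$ after the shift.
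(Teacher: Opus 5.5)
Your proposal is correct and matches the paper's proof: apply \eqref{eq:rec-for-P} at $r=3$ to $\ocR(2,2,1)$ and again to $\ocR(3,3,2)$, then expand $\ocR(2,2,3)$ via \eqref{eq:rec-for-Q} with the identity permutation, which gives exactly the shifts $(5,3,4)$, $(5,3,4)$, $(5,5,4)$ and prefactors $xy_1q^2$, $xy_2q^2$, $xy_3q^3$ that you computed. You are also right that the telescoping in the proof of Lemma~\ref{le:rec-for-Q} should read $1-q^{2n_{\sigma(i)}}$ instead of $1-q^{2\sigma(i)}$, and your explicit $R=3$ check confirms the lemma as stated.
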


\begin{proof}
	This is
	\begin{align*}
		F_7(x) &= F_1(xq^2) + xy_3qF_4(xq^2) + x^2y_3^2q^3F_7(xq^2)\\
		&\quad + xy_1q^2F_8(xq^2) + xy_2q^2F_9(xq^2) + xy_3q^2F_{10}(xq^2).
	\end{align*}
	We start with \eqref{eq:rec-for-P} at $r=3$,
	\begin{align*}
		\ocR(2,2,1) = \ocR(2,2,3) + xy_3q \ocR(3,3,2) + xy_3q^2 \ocR(4,4,3).
	\end{align*}
	For the term $\ocR(3,3,2)$, applying \eqref{eq:rec-for-P} at $r=3$ once more,
	\begin{align*}
		\ocR(3,3,2) = \ocR(3,3,4) + xy_3q^2 \ocR(4,4,3) + xy_3q^3 \ocR(5,5,4).
	\end{align*}
	For the term $\ocR(2,2,3)$, we need \eqref{eq:rec-for-Q} with the permutation $\sigma=(1,2,3)$ to get
	\begin{align*}
		\ocR(2,2,3) = \ocR(3,3,4) + xy_1q^2 \ocR(5,3,4) + xy_2q^2 \ocR(5,3,4) + xy_3q^3 \ocR(5,5,4).
	\end{align*}
	Then \eqref{eq:221} is established.
\end{proof}

\begin{lemma}
	We have
	\begin{align}\label{eq:221+423}
		\ocR(2,2,1) + xy_2q \ocR(4,2,3) &= \ocR(3,3,4) + xy_3q^3 \ocR(5,5,4)\notag\\
		&\quad + xy_2q\ocR(4,4,3) + x^2y_2^2q^4 \ocR(6,4,5)\notag\\
		&\quad + xy_3q\ocR(3,3,4) + x^2y_3^2q^4 \ocR(5,5,4)\notag\\
		&\quad + x^2y_2^2q^3 \ocR(5,3,4) + x^2y_3^2q^3 \ocR(4,4,3)\notag\\
		&\quad + xy_1q^2 \ocR(5,3,4) + xy_2q^2 \ocR(5,3,4)\notag\\
		&\quad + xy_3q^2 \ocR(4,4,3).
	\end{align}
\end{lemma}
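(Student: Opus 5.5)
This identity encodes the row of \eqref{eq:F} for $F_2$ (equivalently $F_3$), namely
\begin{align*}
	F_2(x) &= F_1(xq^2) + xy_2qF_3(xq^2) + xy_3qF_4(xq^2) + x^2y_2^2q^3F_6(xq^2) + x^2y_3^2q^3F_7(xq^2)\\
	&\quad + xy_1q^2F_8(xq^2) + xy_2q^2F_9(xq^2) + xy_3q^2F_{10}(xq^2),
\end{align*}
read off from the second row of $\bA$. We substitute the conjectured closed forms shifted by $x\mapsto xq^2$:
\begin{itemize}
	\item $F_1=F_4\mapsto \ocR(3,3,4)+xy_3q^3\ocR(5,5,4)$;
	\item $F_3\mapsto \ocR(4,4,3)+xy_2q^3\ocR(6,4,5)$;
	\item $F_6=F_8=F_9\mapsto\ocR(5,3,4)$;
	\item $F_7=F_{10}\mapsto\ocR(4,4,3)$.
\end{itemize}
This reproduces the right-hand side of \eqref{eq:221+423} term by term. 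So the task is to show that the left-hand side $\ocR(2,2,1)+xy_2q\ocR(4,2,3)$ equals that combination.

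The efficient route is to reuse \eqref{eq:221} rather than start from scratch. Subtracting \eqref{eq:221} from \eqref{eq:221+423}, the terms $\ocR(3,3,4)$, $xy_3q^3\ocR(5,5,4)$, $xy_3q\ocR(3,3,4)$, $x^2y_3^2q^4\ocR(5,5,4)$, $x^2y_3^2q^3\ocR(4,4,3)$, $xy_1q^2\ocR(5,3,4)$, $xy_2q^2\ocR(5,3,4)$ and $xy_3q^2\ocR(4,4,3)$ all cancel. What remains to show is
\begin{align*}
	xy_2q\,\ocR(4,2,3) = xy_2q\,\ocR(4,4,3) + x^2y_2^2q^4\,\ocR(6,4,5) + x^2y_2^2q^3\,\ocR(5,3,4).
\end{align*}
After dividing by $xy_2q$, this is exactly the instance of \eqref{eq:rec-for-P} at $r=2$ already displayed in the proof of \eqref{eq:112+332}:
\[
\ocR(4,2,3)=\ocR(4,4,3)+xy_2q^2\ocR(5,3,4)+xy_2q^3\ocR(6,4,5).
\]
To see that it is an instance of Lemma~\ref{le:rec-for-P}, take $A_2=2$ and $\uba_2=(1,1,1)$, so that $\ubb+\uba_2=(5,3,4)$ and $\ubb+\uba_2+\ubone=(6,4,5)$; the prefactor is $x y_2 q^{\beta_2}=xy_2q^2$.

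The only real risk is bookkeeping: matching each $F_j(xq^2)$ to its shifted $\ocR$ expression, and checking that the coefficients in \eqref{eq:221+423} line up with those in \eqref{eq:221}. In particular, the term $x^2y_2^2q^3\ocR(5,3,4)$ comes from $x^2y_2^2q^3F_6(xq^2)$, while $x^2y_2^2q^4\ocR(6,4,5)$ comes from the second piece of $xy_2qF_3(xq^2)$. Once these correspondences are verified, the proof consists of one citation of \eqref{eq:221} and one application of \eqref{eq:rec-for-P}.
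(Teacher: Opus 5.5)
Your proposal is correct and is the paper's proof: subtracting \eqref{eq:221} leaves the $r=2$ instance of \eqref{eq:rec-for-P} applied to $\ocR(4,2,3)$, namely $\ocR(4,2,3)=\ocR(4,4,3)+xy_2q^2\ocR(5,3,4)+xy_2q^3\ocR(6,4,5)$. Your bookkeeping is also accurate, both for the $F_2$ row of \eqref{eq:F} and for the shifted closed forms of $F_1(xq^2),\ldots,F_{10}(xq^2)$.
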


\begin{proof}
	This is
	\begin{align*}
		F_2(x) &= F_1(xq^2) + xy_2qF_3(xq^2) + xy_3qF_4(xq^2) + x^2y_2^2q^3F_6(xq^2)\\
		&\quad + x^2y_3^2q^3F_7(xq^2) + xy_1q^2F_8(xq^2) + xy_2q^2F_9(xq^2) + xy_3q^2F_{10}(xq^2).
	\end{align*}
	In light of \eqref{eq:221}, we only need the following relation to prove \eqref{eq:221+423}:
	\begin{align*}
		\ocR(4,2,3) = \ocR(4,4,3) + xy_2q^2 \ocR(5,3,4) + xy_2q^3 \ocR(6,4,5),
	\end{align*}
	which can be shown by using \eqref{eq:rec-for-P} at $r=2$.
\end{proof}

\section{Russell's conjecture}\label{sec:R-conj}

We prove Russell's conjecture stated in Theorem~\ref{th:Russell-conj}. This time we further specify the triple sum $\ocR(\beta_1,\beta_2,\beta_3)$ in \eqref{eq:R-def} as
	\begin{align*}
		\ocR^{\dagger}(\beta_1,\beta_2,\beta_3)&:=\sum_{n_1,n_2,n_3\ge 0} q^{3\binom{n_1}{2}+\binom{n_2}{2}+\binom{n_3}{2}+n_1n_2+n_2n_3+n_3n_1}\notag\\
		&\;\quad\times \frac{y^{2n_1+n_2+n_3}q^{\beta_1n_1+\beta_2n_2+\beta_3n_3}(-q;q)_{n_1+n_2+n_3}}{(q^2;q^2)_{n_1}(q^2;q^2)_{n_2}(q^2;q^2)_{n_3}}.
	\end{align*}
	Recalling \eqref{eq:main-gf-2}, we need to show
	\begin{align*}
		\ocR^{\dagger}(2,2,1) + yq \ocR^{\dagger}(4,2,3) = (1+yq) \ocR^{\dagger}(4,2,1).
	\end{align*}
	Applying \eqref{eq:rec-for-P} at $r=1$,
	\begin{align*}
		\ocR^{\dagger}(2,2,1) = \ocR^{\dagger}(4,2,1) + y^2 q^2 \ocR^{\dagger}(5,3,2) + y^2 q^3 \ocR^{\dagger}(6,4,3).
	\end{align*}
	Thus, it is sufficient to show that
	\begin{align*}
		\ocR^{\dagger}(4,2,1) = \ocR^{\dagger}(4,2,3) + y q \ocR^{\dagger}(5,3,2) + y q^2 \ocR^{\dagger}(6,4,3),
	\end{align*}
	which follows from \eqref{eq:rec-for-P} at $r=3$.

\section{Rogers--Ramanujan type identity}\label{sec:RR-id}

In this part, we establish the Rogers--Ramanujan type identity \eqref{eq:RR-1}. Here we offer two different standpoints, one combinatorial and the other $q$-hypergeometric, to understand this identity.

\subsection{A combinatorial view}\label{sec:RR-id-comb}

Let $\widetilde{\Lambda}$ be either $\Lambda$ or $\Lambda_{\{1_\sfR,1_\sfG\}}$. Our key observation is that given any $\lambda\in \widetilde{\Lambda}$, the red and green colors are interchangeable for the smallest part that is not blue. In doing so, the parity of the numbers of red parts in the two partitions differs. This fact immediately gives us the equidistribution that for every $N\ge 0$, $M\ge 0$, and $L\ge 1$,
\begin{align*}
	&\#\big\{\lambda\in \widetilde{\Lambda}(N): \text{$\len_\sfB(\lambda)=M$, $\len_\sfR(\lambda)+\len_\sfG(\lambda) = L$ and $\len_\sfR(\lambda)$ is even}\big\}\notag\\
	&\qquad = \#\big\{\lambda\in \widetilde{\Lambda}(N): \text{$\len_\sfB(\lambda)=M$, $\len_\sfR(\lambda)+\len_\sfG(\lambda) = L$ and $\len_\sfR(\lambda)$ is odd}\big\},
\end{align*}
where $\widetilde{\Lambda}(N)$ is the subset of partitions of size $N$ in $\widetilde{\Lambda}$. Therefore,
\begin{align*}
	\sum_{\lambda\in \widetilde{\Lambda}} (-1)^{\len_\sfR(\lambda)} x^{\len_\sfR(\lambda)+\len_\sfG(\lambda)} y^{\len_\sfB(\lambda)} q^{|\lambda|} = \sum_{\substack{\lambda\in \widetilde{\Lambda}\\\len_\sfR(\lambda)=\len_\sfG(\lambda)=0}} y^{\len_\sfB(\lambda)} q^{|\lambda|},
\end{align*}
while the latter is simply the generating function for partitions into distinct parts, thereby producing $(-yq;q)_\infty$. In other words, we have the following signed counting for Russell's three-colored partitions.

\begin{theorem}
	Let $\widetilde{\Lambda}$ be either $\Lambda$ or $\Lambda_{\{1_\sfR,1_\sfG\}}$. We have
	\begin{align}\label{eq:signed}
		 \sum_{\lambda\in \widetilde{\Lambda}} (-1)^{\len_\sfR(\lambda)} x^{\len_\sfR(\lambda)+\len_\sfG(\lambda)} y^{\len_\sfB(\lambda)} q^{|\lambda|} = (-yq;q)_\infty.
	\end{align}
\end{theorem}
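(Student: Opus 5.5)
The plan is to prove \eqref{eq:signed} with a sign-reversing involution on $\widetilde{\Lambda}$. The involution should keep the size, the number of blue parts and the number of non-blue parts fixed, and flip the parity of $\len_\sfR$. Its fixed points should be exactly the partitions with no red or green parts.

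\emph{Construction.} Let $\lambda\in\widetilde{\Lambda}$ have at least one non-blue part, and let $p$ be the smallest such part. Define $\iota(\lambda)$ by recoloring $p$ red$\leftrightarrow$green and leaving every other part unchanged. Then $|\iota(\lambda)|=|\lambda|$, $\len_\sfB$ and $\len_\sfR+\len_\sfG$ are unchanged, and $\len_\sfR$ changes by $\pm1$. Recoloring does not change which part is the smallest non-blue one, so $\iota\circ\iota=\mathrm{id}$. The substantive step, checked next, is that $\iota$ maps $\widetilde{\Lambda}$ into itself.

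\emph{Closure under $\iota$.} The condition that each part size appears at most once is untouched. For the forbidden pairs, check the two roles $p$ can play.
\begin{itemize}[leftmargin=*,align=left,itemsep=1pt]
	\item \emph{$p$ is the larger part of a pair $j+(j+k)$ with $j+k=p$.} The part $j<p$ must be blue, by minimality of $p$. The only forbidden pairs with a blue smaller part are $j_\sfB+(j+1)_\sfR$ and $j_\sfB+(j+1)_\sfG$. These forbid $p=j+1$ in both colors, so the constraint does not distinguish red from green.
	\item \emph{$p$ is the smaller part.} The pairs forbidden after $j_\sfR$ are $(j+1)_\sfR,(j+2)_\sfR,(j+1)_\sfB$. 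The pairs forbidden after $j_\sfG$ are $(j+1)_\sfR,(j+2)_\sfR,(j+1)_\sfB$, the identical list. So this constraint is also symmetric in red and green.
\end{itemize}
Hence $\iota(\lambda)\in\Lambda$. For $\widetilde{\Lambda}=\Lambda_{\{1_\sfR,1_\sfG\}}$, the extra restriction excludes the size $1$ in both non-blue colors, so it is also preserved.

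\emph{Conclusion.} The involution pairs terms of \eqref{eq:signed} with equal weight $x^{\len_\sfR+\len_\sfG}y^{\len_\sfB}q^{|\lambda|}$ and opposite sign, so they cancel. What remains is the sum over $\lambda$ with $\len_\sfR(\lambda)=\len_\sfG(\lambda)=0$, each counted with sign $+1$. These are partitions into distinct blue parts. No forbidden pair consists of two blue parts, and $1_\sfB$ is allowed in both choices of $\widetilde{\Lambda}$, so every such partition lies in $\widetilde{\Lambda}$. Their generating function is $\prod_{n\ge1}(1+yq^n)=(-yq;q)_\infty$.

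The main point needing care is the closure check in the second step. It relies on the observation that every part below $p$ is blue, so only the pairs with a blue smaller part can involve $p$ from below. Once that is noted, the rest is direct inspection of the list of forbidden pairs.
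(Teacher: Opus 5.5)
Your proposal is correct and follows the paper's own argument: swap red and green on the smallest non-blue part, which flips the parity of $\len_\sfR(\lambda)$ while fixing $|\lambda|$, $\len_\sfB(\lambda)$ and $\len_\sfR(\lambda)+\len_\sfG(\lambda)$, so only the partitions into distinct blue parts survive, giving $(-yq;q)_\infty$. Your closure check is also right: every part below $p$ is blue, and the forbidden successors of $j_\sfR$ and $j_\sfG$ are the same list. The paper leaves this verification implicit, so your version spells out a step it only asserts.
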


In particular, if we choose $\widetilde{\Lambda} = \Lambda_{\{1_\sfR,1_\sfG\}}$, the left-hand side of \eqref{eq:signed} becomes
\begin{align*}
	&\sum_{\lambda\in \Lambda_{\{1_\sfR,1_\sfG\}}} (-1)^{\len_\sfR(\lambda)} x^{\len_\sfR(\lambda)+\len_\sfG(\lambda)} y^{\len_\sfB(\lambda)} q^{|\lambda|}\\
	&\qquad =\sum_{n_1,n_2,n_3\ge 0} q^{3\binom{n_1}{2}+\binom{n_2}{2}+\binom{n_3}{2}+n_1n_2+n_2n_3+n_3n_1+2n_1+2n_2+n_3}\notag\\
	&\qquad\quad\times \frac{(-1)^{n_1}x^{n_1+n_2}y^{n_3}(-q;q)_{n_1+n_2+n_3}}{(q^2;q^2)_{n_1}(q^2;q^2)_{n_2}(q^2;q^2)_{n_3}},
\end{align*}
where we have used the generating function identity \eqref{eq:main-gf-4}. Now making the dilation $(x,y)\mapsto (xq^{-2},yq^{-1})$ gives \eqref{eq:RR-1}. In this sense, we are led to a combinatorial interpretation of the desired Rogers--Ramanujan type identity.

\subsection{A $q$-hypergeometric view}\label{sec:RR-id-q}

Finally, we prove \eqref{eq:RR-1} by $q$-hypergeometric means. Let us start with the sums over $n_1$ and $n_2$ and rewrite the right-hand side of \eqref{eq:RR-1} as
\begin{align*}
	\sum_{n_3\ge 0} \frac{y^{n_3} q^{\binom{n_3}{2}}}{(q;q)_{n_3}} \sum_{n_1,n_2\ge 0} \frac{(-1)^{n_1}x^{n_1+n_2}q^{3\binom{n_1}{2}+\binom{n_2}{2}+n_1n_2+n_3(n_1+n_2)}(-q^{n_3+1};q)_{n_1+n_2}}{(q^2;q^2)_{n_1}(q^2;q^2)_{n_2}}.
\end{align*}
If we introduce the auxiliary index $N := n_1+n_2$, the inner sums over $n_1$ and $n_2$ become
\begin{align*}
	\sum_{N\ge 0}\frac{x^N q^{\binom{N}{2}+n_3N}(-q^{n_3+1};q)_N}{(q^2;q^2)_N} \sum_{n_1\ge 0} (-1)^{n_1} q^{2\binom{n_1}{2}} \qbinom{N}{n_1}_{q^2} = 1,
\end{align*}
where we have used the $q$-binomial coefficient identity in \cite[p.~36, eq.~(3.3.6)]{And1998}:
\begin{align*}
	(z;q)_N = \sum_{n\ge 0} (-z)^n q^{\binom{n}{2}} \qbinom{N}{n}_q,
\end{align*}
so that the sum over $n_1$ becomes $(1;q^2)_N$, vanishing unless $N=0$. We conclude that
\begin{align*}
	\RHS\eqref{eq:RR-1} = \sum_{n_3\ge 0} \frac{y^{n_3} q^{\binom{n_3}{2}}}{(q;q)_{n_3}} = (-y;q)_\infty,
\end{align*}
as claimed, where Euler's second sum~\cite[p.~354, eq.~(II.2)]{GR2004} has been applied.

\subsection*{Acknowledgements}

This work was supported by the Austrian Science Fund (No.~10.55776/F1002). 

\bibliographystyle{amsplain}

\end{document}